\newtheorem{thm}{Theorem}[section]
\newtheorem{proposition}[thm]{Proposition}
\newtheorem{lemma}[thm]{Lemma}
\newtheorem{corollary}[thm]{Corollary}
\theoremstyle{definition}
\newtheorem{definition}[thm]{Definition}
\theoremstyle{remark}
\newcommand*{\braces}[1]{\left\lbrace #1 \right\rbrace}
\newcommand*{\setof}{\braces}
\newcommand*{\deftobe}{\mathrel{\coloneqq}}
\newcommand{\maps}{\colon}
\newcommand{\st}{\,:\,}
\newcommand{\bndr}{\partial}
\DeclareMathOperator{\conv}{conv}
\newcommand{\intr}{\operatorname{int}}
\newcommand{\relintr}[1]{\operatorname{relint}(#1)}
\DeclareMathOperator{\aff}{aff}
\DeclareMathOperator{\cone}{cone}
\DeclareMathOperator{\lin}{lin}
\newcommand{\dual}[1]{#1^{\ast}}   % dual space
\newcommand{\ddual}[1]{#1^{\ast\ast}}   % double dual space
\newcommand{\dualc}[1]{#1^{\ast}}   % dual cone
\newcommand{\ddualc}[1]{#1^{\ast\ast}}   % double dual cone
\newcommand{\duala}[1]{#1^{\ast}}   % perpendicular affine section
\newcommand{\norm}[1]{\left\Vert #1 \right\Vert}
\newcommand{\abs}[1]{\left\vert #1 \right\vert}
\newcommand{\codim}{\operatorname{codim}}
\newcommand{\coperp}[1]{\prescript{\perp}{}{#1}}
\newcommand{\weakstar}{$\text{weak}^{\ast}$}
\newcommand{\R}{\mathbb{R}}
\renewcommand{\epsilon}{\varepsilon}
\renewcommand{\phi}{\varphi}
\renewcommand{\emptyset}{\varnothing}
\title[Ellipsoidal cones]{%
   Ellipsoidal cones in normed vector spaces
}%
\author[F.~Jafari \and T.~B.~McAllister]{%
   Farhad Jafari \and Tyrrell B. McAllister
}%
\address
{%
   Department of Mathematics \\
   University of Wyoming \\
   Laramie, WY 82071 \\
   USA
}%
\email[Farhad Jafari]{%
   fjafari@uwyo.edu 
}
\email[Tyrrell B. McAllister]{%
   tmcallis@uwyo.edu
}%
\keywords{ellipsoidal cone, ordered normed linear space,
centrally symmetric convex body.}
\subjclass[2010]{Primary 46B20; Secondary 52A50, 46B40, 46B10.}
\begin{document}

% % elsarticle Title information
% % 
% \begin{frontmatter}
%    \title%
%    {%
%       Ellipsoidal cones in normed vector spaces
%    }%
%    
%    \author[uwyo]{Farhad Jafari}
%    \ead{fjafari@uwyo.edu}
%    
%    \author[uwyo]{Tyrrell B. McAllister\corref{cor}}
%    \cortext[cor]{Corresponding author}
%    \ead{tmcallis@uwyo.edu}
%    
%    \address[uwyo]
%    {%
%       Department of Mathematics \\
%       University of Wyoming \\
%       Laramie, WY 82071 \\
%       USA
%    }%
%    
%    \begin{keyword}
%       ellipsoidal cone \sep
%       ordered normed linear space \sep
%       centrally symmetric convex body.
%       
%       \MSC[2010]{Primary 46B20; Secondary 52A50, 46B40, 46B10.}
%    \end{keyword}
%    
%    
%    \begin{abstract}
%       We give two characterizations of cones over ellipsoids in real
%       normed vector spaces.  Let $C$ be a closed convex cone with
%       nonempty interior such that $C$ has a bounded section of
%       codimension $1$.  We show that $C$ is a cone over an ellipsoid
%       if and only if every bounded section of $C$ has a center of
%       symmetry.  We also show that $C$ is a cone over an ellipsoid if
%       and only if the affine span of $\bndr C \cap \bndr(a - C)$ has
%       codimension $1$ for every point $a$ in the interior of $C$.
%       These results generalize the finite-dimensional cases proved in
%       \cite{JerMcA2013}.
%    \end{abstract}
% \end{frontmatter}

% amsart abstract and \maketitle
% 
\begin{abstract}
   We give two characterizations of cones over ellipsoids in real 
   \linebreak % Needed for amsart
   normed vector spaces.  Let $C$ be a closed convex cone with
   nonempty interior such that $C$ has a bounded section of
   codimension $1$.  We show that $C$ is a cone over an ellipsoid
   if and only if every bounded section of $C$ has a center of
   symmetry.  We also show that $C$ is a cone over an ellipsoid if
   and only if the affine span of $\bndr C \cap \bndr(a - C)$ has
   codimension $1$ for every point $a$ in the interior of $C$.
   These results generalize the finite-dimensional cases proved in
   \cite{JerMcA2013}.
\end{abstract}

\maketitle

\section{Introduction}
In a landmark paper \cite{Rudin}, W.~Rudin and K.~T.~Smith
answered a question of J.~Korevaar by showing that, if $ X $ is a
strictly convex real Banach space of dimension $ n \neq 2 $ and,
for each finite-dimensional subspace $ \pi $ in $ X $, the best
approximation function $ P_\pi $ is linear, then $X $ is a Hilbert
space.  Their theorem led to the following characterization of
ellipsoids: {\it If $K $ is a centrally symmetric compact convex
body in $n$-space (where $n$ is possibly infinite) such that, for
every $ \nu $-dimensional subspace $ \pi $ with $ 0 < \nu < n-1 $,
the union of the tangency sets of all support planes of $K $ which
are translates of $ \pi $ lies in a plane of dimension $n - \nu$,
then $ K $ is an ellipsoid.} This result is a vivid example of how
the characterization of ellipsoids is intimately tied to
characterizing the Banach spaces that are Hilbert spaces.

In \cite{JerMcA2013}, the second author with J.~Jer\'onimo showed
that a fi\-nite-di\-men\-sion\-al pointed cone in which every
bounded section has a center of symmetry is an ellipsoidal cone.
Hence these are exactly the cones over closed unit balls of
Hilbert spaces that have been translated away from the origin.
The primary goal of this paper is to generalize this result to
infinite-dimensional cones.  While this may seem like a result
that would follow from a straightforward induction argument (at
least for a separable Banach space), such an approach is elusive.
We give an affirmative answer to the infinite-dimensional
generalization by carefully using the following fact: \emph{Cones
with bounded sections have dual cones with nonempty interiors}.

Fix a real normed vector space $V$.  A \emph{cone} in $V$ is a
nonempty convex subset $C\subset V$ that is closed under
nonnegative scalar multiplication.  The cone $C$ is \emph{pointed}
if it contains no line through the origin; that is, if $C \cap
(-C) = \setof{0}$.

Given a convex subset $K \subset V$, let $\aff(K)$ denote the
affine span of $K$, and let $\lin(K)$ be the linear span of $K$.
We write $\intr(K)$ for the interior of $K$ in $V$ with respect to
the norm on $V$.  The \emph{relative interior} $\relintr{K}$ and
the \emph{relative boundary} $\bndr K$ are the interior and
boundary, respectively, of $K$ with respect to $\aff(K)$.  The
\emph{cone over} $K$, denoted $\cone(K)$, is the intersection of
all cones containing $K$.  A \emph{section} of $K$ is the nonempty
intersection of $K$ with a closed affine subspace of $V$.  We call
a section $S$ of $K$ \emph{proper} if $S$ has codimension $1$ with
respect to $\aff(K)$.

While the concept of an ellipsoid in finite dimensions is well
known, in infinite dimensional vector spaces this requires careful
definition.  We define a subset $E \subset V$ to be an
\emph{ellipsoid} if, for some $x \in \aff(E)$, there exists an
inner product on the linear space $\aff(E) - x$ such that $E - x$
is the closed unit ball corresponding to this inner product.  An
\emph{ellipse} is a $2$-dimensional ellipsoid.  A cone $C \subset
V$ is \emph{ellipsoidal} if some proper section of $C$ is an
ellipsoid.

A subset $S \subset V$ is \emph{centrally symmetric} if there
exists a point $x \in V$ such that $S - x = -(S - x)$.  In this
case, $x$ is a \emph{center of symmetry} of $S$, and $x$ is the
unique center of symmetry of $S$ if $S$ is nonempty and bounded.

\begin{definition}[CSS Cones]
   \label{defn:CSSCones}
   Let $C \subset V$ be a cone.  We say that $C$
   satisfies the \emph{centrally symmetric sections
   \textup{(}CSS\textup{)} property} if
   \begin{enumerate}
      \item 
      $C$ is closed and $\relintr{C} \ne \emptyset$,
      
      \item  
      there exists a bounded proper section of $C$, and
   
      \item  
      every bounded proper section of $C$ is centrally symmetric.
   \end{enumerate}
   We call a cone with the CSS property a \emph{CSS cone}.
\end{definition}

Our main result is that the CSS cones in $V$ are precisely the
ellipsoidal cones.
\begin{thm}[proved on p.\ \pageref{proof:CSSproof}]
   \label{thm:CSSTheorem}
   Let $C$ be a cone in a normed vector space $V$.  Then $C$ is a
   CSS cone if and only if $C$ is an ellipsoidal cone.
\end{thm}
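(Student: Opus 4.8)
The plan is to prove both directions of the equivalence, with the forward direction (CSS implies ellipsoidal) being the substantive one. I would begin with the easy direction: if $C$ is an ellipsoidal cone, then some proper section is an ellipsoid $E$, which is centrally symmetric by definition. Every other bounded proper section is cut out by a hyperplane transverse to the cone's axis, and projecting radially from the apex maps one proper section to another by a central projectivity; one checks that the image of an ellipsoid under such a radial rescaling-and-translation is again an ellipsoid, hence centrally symmetric. This reduces to verifying that a conic section of an elliptic cone is again an ellipse (in the relevant affine chart), which is classical in finite dimensions and extends to the Hilbert-space setting using the defining inner product. Conditions (1) and (2) of the CSS property are immediate from $C$ being a closed convex cone with a bounded proper section.

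For the hard direction, assume $C$ is a CSS cone. The central strategy, flagged in the introduction, is to pass to the dual cone: \emph{cones with bounded sections have dual cones with nonempty interior}. First I would fix a bounded proper section $S_0 = C \cap H_0$ and normalize so that the apex is at the origin. The key idea is to build a candidate inner product. I expect the natural approach is to show that $C$ is a cone over a centrally symmetric convex body $B$ (a normalized proper section with its center translated to a fixed point), and then to reduce to the finite-dimensional theorem of \cite{JerMcA2013} on every finite-dimensional subspace. Concretely, for each finite-dimensional subspace $W \subset V$ containing the axis of $C$, the intersection $C \cap W$ is a finite-dimensional cone every bounded proper section of which is a section of a bounded proper section of $C$, hence centrally symmetric; so $C \cap W$ is CSS and therefore ellipsoidal by \cite{JerMcA2013}. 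This yields, for each such $W$, an inner product on $W$ whose unit ball (suitably positioned) is the section $C \cap W$.

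The main obstacle is coherently assembling these finite-dimensional inner products into a single inner product on the whole ambient linear space $\aff(S_0) - (\text{center})$, and then showing the resulting quadratic form is genuinely an inner product (positive definite, and yielding a closed bounded unit ball). The ellipsoid on each finite-dimensional slice determines a positive-definite quadratic form $q_W$ on $W$, unique up to the normalization forced by fixing the section hyperplane; the compatibility to check is that $q_{W'}$ restricts to $q_W$ whenever $W \subset W'$. I expect uniqueness of the center of symmetry and of the ellipsoid structure on each slice to force this compatibility, giving a well-defined quadratic form $q$ on the union of all finite-dimensional subspaces, i.e.\ on the whole span. Here is where the dual-cone machinery should be essential: boundedness of sections of $C$ corresponds to $\intr(\dual{C}) \ne \emptyset$, which I would use to obtain a uniform bound guaranteeing that the limiting form $q$ is bounded below and above on the section hyperplane, so that its unit ball is a genuine ellipsoid rather than a degenerate or unbounded set. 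Continuity and a density argument then extend $q$ from the dense union of finite-dimensional subspaces to a continuous inner product on the completion, and one verifies that the corresponding closed unit ball coincides with the given proper section of $C$, completing the proof that $C$ is ellipsoidal.
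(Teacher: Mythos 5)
Your reduction to finite dimensions contains a genuine gap, and it is precisely the obstruction that the paper itself flags when it says a ``straightforward induction argument \dots\ is elusive.'' You assert that, for a finite-dimensional subspace $W$, every bounded proper section of $C \cap W$ ``is a section of a bounded proper section of $C$, hence centrally symmetric.'' Central symmetry does not pass to sections: a section of a centrally symmetric convex body is guaranteed to be centrally symmetric only when it passes through the center (consider a cube, where a hyperplane cutting off a corner yields a simplex). The CSS hypothesis supplies centers of symmetry only for \emph{codimension-one} bounded sections of $C$; it says nothing about the low-dimensional sections arising as proper sections of $C \cap W$, which sit inside proper sections of $C$ in arbitrary position relative to their centers. (A secondary unaddressed point: to realize a proper section of $C \cap W$ as a section of a \emph{bounded} proper section of $C$, you must extend the defining functional on $W$ to a functional lying in $\intr(\dualc{C})$ with prescribed restriction to $W$, which is not automatic.) Consequently the claim that $C \cap W$ is CSS is unjustified, and the rest of your construction --- the forms $q_W$, their compatibility, the density and uniform-boundedness arguments --- has nothing to stand on.

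The paper circumvents exactly this obstruction by working with finite-\emph{co}dimensional sections and then dualizing, rather than with finite-dimensional sections of $C$ itself. Lemma \ref{lem:FiniteCodimSectionsAreCentSymm} shows, by induction on codimension using quotient maps, that every bounded finite-codimensional section $S$ of $C$ meeting $\intr(C)$ is centrally symmetric; the key device is to produce a proper section $T \supset S$ whose center of symmetry lies \emph{on} $S$, so that the symmetry of $T$ restricts to $S$. This symmetry is then transported to the dual cone (Lemmas \ref{lem:NormalOfCentSymmSectionIsCentOfSymm} and \ref{lem:CSSImpliesDualIsCoCSS}): bounded finite-\emph{dimensional} sections of $\dualc{C}$ meeting $\intr(\dualc{C})$ are centrally symmetric, so the finite-dimensional theorem of \cite{JerMcA2013} applies to finite-dimensional subcones of $\dualc{C}$, and the Jordan--von Neumann criterion assembles the resulting ellipsoids --- this is the rigorous version of your ``patching'' step, but it takes place in $\dual{V}$, not in $V$. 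Finally, the conclusion that $\dualc{C}$ is ellipsoidal is pulled back to $C$ through the canonical embedding into $\ddual{V}$ together with the duality result for ellipsoidal cones in Hilbert spaces \cite{Krein}. Note that your proposal invokes $\intr(\dualc{C}) \ne \emptyset$ only as a boundedness estimate; in the paper the dual cone plays the essential structural role, being the place where the central-symmetry information can actually be assembled.
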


The proof that finite-dimensional CSS cones are ellipsoidal
appeared in~\cite{JerMcA2013}.  That article also established
another characterization of fi\-nite-di\-men\-sion\-al ellipsoidal
cones: Such a cone is ellipsoidal if and only if it is a so-called
\emph{FBI cone}.

\begin{definition}
   \label{def:FBI}
   Let $C \subset V$ be a cone.  We say that~$C$ satisfies the
   \emph{flat boundary intersections \textup{(}FBI\textup{)}
   property} if
   \begin{enumerate}
      \item 
      $C$ is closed and $\relintr{C} \ne \emptyset$,
      
      \item  
      there exists a bounded proper section of $C$, and
   
      \item  
      for each $a \in \relintr{C}$, some proper section of $C$
      contains $\bndr C \cap \bndr(a - C)$.
   \end{enumerate}
   We call a cone with the FBI property an \emph{FBI cone}.  (See 
   Figure~\ref{fig:FBICone}.)
\end{definition}

\begin{figure}
   \begin{center}
      \includegraphics{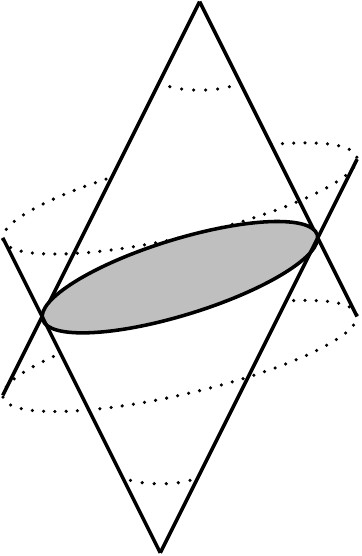}      
   \end{center}
   \caption{An FBI cone $C$ and a translation of $-C$.  The shaded
   region is the convex hull of the intersection of the
   boundaries.  The FBI property implies that this convex hull is
   contained in a hyperplane.}
   \label{fig:FBICone}
\end{figure}

\begin{thm}[proved on p.~\pageref{proof:FBIimpliesEllipsoidal}]
   \label{thm:FBIimpliesEllipsoidal}
   Let $C$ be a cone in a normed vector space $V$.  Then $C$ is a
   FBI cone if and only if $C$ is an ellipsoidal cone.
\end{thm}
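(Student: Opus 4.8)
The plan is to prove the two implications separately: the direction that an ellipsoidal cone is FBI is a direct computation, while the converse reduces to the finite-dimensional theorem of \cite{JerMcA2013}. For the forward direction I would first put $C$ in normal form. If a proper section $E$ of $C$ is an ellipsoid, then, working inside $\lin(C)=\aff(C)$, there is a continuous linear functional $t$ and an inner product on $\ker t$ with associated norm $\norm{\cdot}$ so that $C=\setof{v\st t(v)\ge\norm{Pv}}$, where $P$ is the projection onto $\ker t$ along the apex line; thus $\bndr C$ is cut out by the quadric $Q(v)\deftobe\norm{Pv}^2-t(v)^2=0$ together with $t(v)\ge 0$. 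Fix $a\in\relintr{C}$, so $Q(a)<0$. A point $v$ lies in $\bndr C\cap\bndr(a-C)$ only if $Q(v)=0$ and $Q(a-v)=0$; subtracting, the purely quadratic terms cancel and one is left with the single \emph{affine} equation $Q(v)-Q(a-v)=0$, namely $2\langle Pa,Pv\rangle-2\,t(a)t(v)=\norm{Pa}^2-t(a)^2$. This defines a nontrivial affine hyperplane $H_a$ of $\aff(C)$ (its functional is nonzero since $t(a)>0$) that contains $\bndr C\cap\bndr(a-C)$, which is exactly the FBI condition; closedness, nonempty relative interior, and the bounded proper section $E$ are immediate. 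Geometrically, $H_a$ is the radical hyperplane of the two quadrics $Q(v)=0$ and $Q(a-v)=0$.

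For the converse I would reduce to finite dimensions and invoke \cite{JerMcA2013}. Let $B=C\cap H$ be the bounded proper section supplied by the FBI property. Fix any $2$-dimensional affine plane $A\subset H$ meeting the relative interior of $B$, and set $W\deftobe\lin(A)$, a three-dimensional subspace through the apex with $W\cap H=A$. The first key step is to show that $C\cap W$ is itself an FBI cone in $W$. Because $W$ meets $\relintr{C}$, the relative-interior calculus gives $\bndr_W(C\cap W)=\bndr C\cap W$ and, for $a\in\relintr{C}\cap W$, $\bndr_W(a-(C\cap W))=\bndr(a-C)\cap W$; hence the boundary intersection computed inside $W$ is exactly $(\bndr C\cap\bndr(a-C))\cap W$, which by the ambient FBI property lies in $H_a\cap W$. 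Provided $W\not\subseteq H_a$, this is a proper section of $C\cap W$, so $C\cap W$ inherits the FBI property (boundedness of the section $B\cap W=C\cap A$ and nonempty relative interior being clear).

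Applying the finite-dimensional theorem of \cite{JerMcA2013} to the three-dimensional FBI cone $C\cap W$, its proper section $B\cap A$ is an ellipse. Since $A$ was an arbitrary $2$-plane in $H$ meeting the relative interior of $B$, every planar section of the convex body $B$ is an ellipse; in particular each is centrally symmetric, which forces $B$ to have a center $c$. Working in $\aff(B)=H$, the Minkowski functional of $B-c$ then restricts to an inner-product norm on every two-dimensional subspace of $H-c$, so it satisfies the parallelogram law on each $2$-plane and hence identically; by the Jordan--von Neumann theorem it is induced by an inner product, so $B$ is an ellipsoid and $C$ is ellipsoidal. (Equivalently, this argument shows every bounded proper section of $C$ is centrally symmetric, so one may instead finish by quoting Theorem~\ref{thm:CSSTheorem}.)

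The main obstacle is the inheritance step, and specifically the nondegeneracy condition $W\not\subseteq H_a$ needed so that $H_a\cap W$ is a genuine codimension-one section of $C\cap W$ rather than all of $W$. The FBI property asserts only the existence of \emph{some} containing hyperplane, so one must argue that it may be taken to be $\aff(\bndr C\cap\bndr(a-C))$ whenever that affine hull already has codimension one, and separately dispose of the degenerate configurations in which the boundary intersection fails to span a hyperplane of $W$. Here the fact emphasized in the introduction---that a cone with a bounded section has a dual cone with nonempty interior---is what guarantees a plentiful supply of good functionals, and hence of good subspaces $W$. Once the inheritance lemma is in place, the remaining steps are the cited finite-dimensional theorem and the routine passage from planar ellipses to a global inner product.
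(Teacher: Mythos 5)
Your skeleton for the hard direction---slice $C$ by three-dimensional subspaces through the apex, show the FBI property is inherited, invoke the finite-dimensional theorem of \cite{JerMcA2013} to get ellipses, and finish with Jordan--von Neumann---is the same as the paper's, and your forward direction (the radical-hyperplane computation, which the paper omits entirely) is essentially right, modulo checking that the functional defining $H_a$ is norm-continuous so that $H_a$ is a \emph{closed} hyperplane as the definition of a section requires. The genuine gap is at the end of your converse. You obtain that every planar section $B \cap A$ of the given bounded proper section $B$ is an ellipse, and then assert that ``each is centrally symmetric, which forces $B$ to have a center $c$.'' Nothing forces this: the centers of the ellipses $B \cap A$ vary with $A$, and you have no argument that they cohere at a single point. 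The implication ``all two-dimensional sections of a convex body are centrally symmetric $\Rightarrow$ the body is centrally symmetric'' is a nontrivial classical theorem (Brunn) even in $\R^{3}$, and you would further need an infinite-dimensional version with a gluing argument (nested finite-dimensional ellipsoidal sections do \emph{not} have matching centers). Without a common center, Jordan--von Neumann cannot be applied, since it needs all two-dimensional sections \emph{through one fixed center} to be ellipses \emph{centered at that point}. Your parenthetical fallback---quote Theorem~\ref{thm:CSSTheorem} instead---has exactly the same defect: your argument never exhibits a center of symmetry for any section of $C$.

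The idea you are missing is the paper's choice of section, which dissolves both this gap and the one you acknowledge. Instead of the given section $B$, the paper fixes $x \in \intr(C)$ and works with $S \deftobe \conv\parens{\bndr C \cap \bndr(2x - C)}$, which is centrally symmetric about $x$ \emph{by construction} (the set $\bndr C \cap \bndr(2x-C)$ is invariant under $z \mapsto 2x - z$) and is a bounded proper section of $C$ by the FBI property together with Lemma~\ref{lem:ConeIntervalsAreBounded}. Every two-dimensional section $E$ of $S$ through $x$ is then automatically symmetric about $x$, so once the finite-dimensional FBI theorem shows $E$ is an ellipse, it is an ellipse centered at $x$, and Jordan--von Neumann applies directly to the Minkowski functional of $S - x$. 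The same choice handles your ``main obstacle,'' the nondegeneracy $W \not\subseteq H_a$: in the paper's setup the three-dimensional subspace is $\lin(E)$, which contains the origin, while the hyperplane $H$ furnished by the FBI property cannot contain the origin (a hyperplane through $0$ would meet $C$ in a cone containing the nonzero equator points, hence in an unbounded set, contrary to the boundedness coming from Lemma~\ref{lem:ConeIntervalsAreBounded}); hence $\lin(E) \not\subset H$ and the inherited boundary intersection lies in the two-dimensional set $\lin(E) \cap H$. In your setup, where neither $B$ nor the planes $A$ carry any built-in symmetry or distinguished center, neither repair is available, so the proposal as written does not close.
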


Unlike the proof of the CSS characterization of ellipsoidal cones,
the proof of the FBI characterization carries over with very
little change to the infinite-dimensional case.  We give this
proof in Section \ref{sec:FBICones}.  In \cite{JerMcA2013}, the
proof that CSS cones in $\R^{n}$ are ellipsoidal proceeded by
showing that CSS cones are FBI cones, so the proof that FBI cones
are ellipsoidal was key to the argument.  Unfortunately, the proof
in \cite{JerMcA2013} that CSS cones are FBI cones relied on the
existence of a measure, so a different strategy is needed to prove
that CSS cones are ellipsoidal in infinite-dimensional normed
vector spaces.

As a corollary of Theorems \ref{thm:CSSTheorem} and
\ref{thm:FBIimpliesEllipsoidal}, we get two characterizations of
those normed vector spaces that are inner-product spaces.

\begin{corollary}
   Let $V$ be a normed vector space.  Then the following are 
   equivalent.
   \begin{enumerate}
      \item  
      $V$ is an inner product space.
   
      \item
      $V$ contains a CSS cone with nonempty interior.
   
      \item
      $V$ contains an FBI cone with nonempty interior.
   \end{enumerate}
   In particular, if a Banach space $X$ contains a CSS cone or an
   FBI cone, then $X$ is a Hilbert space.
\end{corollary}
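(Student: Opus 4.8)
The plan is to read the corollary off Theorems~\ref{thm:CSSTheorem} and~\ref{thm:FBIimpliesEllipsoidal}. Each of those theorems identifies one of the two cone classes with the ellipsoidal cones, so conditions~(2) and~(3) are each equivalent to the single assertion that $V$ contains an ellipsoidal cone with nonempty interior. Thus (2)$\liff$(3) is immediate, and it remains only to show that $V$ is an inner product space if and only if it admits an ellipsoidal cone with nonempty interior.

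For the forward implication I would produce such a cone explicitly from the inner product. Assuming $V \ne \setof 0$, fix an inner product $\langle \cdot, \cdot\rangle$ inducing $\norm{\cdot}$, choose a unit vector $e$, and split $V = \R e \oplus \perpto e$. Setting
\[
   C \deftobe \setof{ te + w \st t \ge 0,\ w \in \perpto e,\ \norm w \le t },
\]
one checks that $e \in \intr C$, that the affine hyperplane $\setof{ v \st \langle v, e\rangle = 1}$ meets $C$ in the bounded proper section $e + \setof{ w \in \perpto e \st \norm w \le 1}$, and that this section is precisely the closed unit ball of the inner product restricted to $\perpto e$. Hence the section is an ellipsoid and $C$ is an ellipsoidal cone with nonempty interior.

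For the converse, suppose $C$ is an ellipsoidal cone with $\intr C \ne \emptyset$, and let $E$ be a proper section of $C$ that is an ellipsoid, with center $c$ and witnessing inner product $\langle \cdot, \cdot \rangle_E$ on the hyperplane $W \deftobe \aff(E) - c$. Because $\intr C \ne \emptyset$ forces $\aff C = V$, the subspace $W$ is a closed linear hyperplane, $V = W \oplus \R c$, and $C = \cone(E)$. I would then assemble a candidate inner product on $V$: keep $\langle \cdot, \cdot \rangle_E$ on $W$, declare the axis direction $c$ orthogonal to $W$, normalize $\langle c, c\rangle$ appropriately, and attempt to show that the norm it induces agrees with $\norm{\cdot}$ on all of $V$.

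The main obstacle is exactly this last step. The inner product is forced on the hyperplane $W$ and along the axis $\R c$, but propagating it to $\norm{\cdot}$ on arbitrary vectors --- equivalently, ruling out any non-Euclidean behaviour of the ambient norm in a space carrying an ellipsoidal cone with interior --- is the delicate point, and is precisely what a naive dimension-by-dimension induction fails to deliver. Granting this, $V$ is an inner product space, which together with the two theorems yields (1)$\liff$(2)$\liff$(3). The final claim then follows at once: a Banach space satisfying (2) or~(3) is an inner product space by the equivalence and complete by hypothesis, hence a Hilbert space.
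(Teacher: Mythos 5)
Your reduction of (2)$\liff$(3) to the single condition that $V$ contains an ellipsoidal cone with nonempty interior is exactly how the corollary is meant to be read off Theorems~\ref{thm:CSSTheorem} and~\ref{thm:FBIimpliesEllipsoidal}, and your ice-cream-cone construction for (1)$\Rightarrow$(2) is fine. The genuine problem is the step you flag as ``the main obstacle'' and then grant yourself: showing that the inner product assembled from the ellipsoid section induces the ambient norm $\norm{\cdot}$. That step is not merely delicate --- it is false, so no argument can close the gap in the form you state it. The CSS property, the FBI property, and the paper's notion of ellipsoid (the closed unit ball of \emph{some} inner product on $\aff(E)-x$, with no compatibility demanded between that inner product and the ambient norm) are all invariant under replacing $\norm{\cdot}$ by any equivalent norm. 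Concretely, take $V=\R^{3}$ with the sup norm and $C=\setof{(x,y,z)\st z\ge\sqrt{x^{2}+y^{2}}}$: its section by the plane $z=1$ is a Euclidean disk, so $C$ is ellipsoidal, hence CSS and FBI by the two theorems (or by the classical theory of conic sections), yet the sup norm satisfies no parallelogram law. So condition (2), read literally against the given norm, cannot imply that $\norm{\cdot}$ itself comes from an inner product.

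The corollary is true --- and is what the paper intends --- once ``$V$ is an inner product space'' is read in the only sense its definition of ellipsoid supports: $V$ admits an inner product whose induced norm is \emph{equivalent} to $\norm{\cdot}$. With that target your own construction finishes with no gap. Choose $E$ to be a \emph{bounded} proper section of $C$ (one exists by clause (2) of the CSS/FBI definitions, and every bounded proper section of an ellipsoidal cone is an ellipsoid, as asserted in the first line of the paper's proof of Theorem~\ref{thm:CSSTheorem}). A bounded proper section cannot lie in a supporting hyperplane of $C$ (a supporting hyperplane that meets a cone contains $0$ and meets it in a subcone), so $E$ meets $\intr(C)$; hence $E-c$ is a bounded convex body with nonempty interior in the closed hyperplane $W$, and the inner-product norm on $W$, being the Minkowski functional of $E-c$, is equivalent to $\norm{\cdot}$ on $W$. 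Declaring $c\perp W$ and normalizing $\langle c,c\rangle$ then gives an inner product on $V=W\oplus\R c$ whose norm is equivalent to $\norm{\cdot}$, because this direct sum is topological. Since an equivalent complete norm remains complete, the final Hilbert-space claim for a Banach space $X$ follows as well. For comparison, the paper never writes out a proof of this corollary: the content of (2)$\Rightarrow$(1) is embedded in its proof of Theorem~\ref{thm:CSSTheorem}, where $\dual{V}$ is shown to carry a (compatible) Hilbert-space structure and $V$ inherits one through the canonical embedding $V\hookrightarrow\ddual{V}$. Your hyperplane-plus-axis construction is the more direct route, and it works once you aim at equivalence rather than equality of norms.
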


\section{Cone lemmas}
\label{sec:ConeLemmas}

In the sections that follow, we will require some lemmas regarding
cones, their duals, their sections, and the relationships between
these concepts and central symmetry.  Most of these lemmas are
well known, though the property of central symmetry seems to be
little-studied in the context of infinite dimensional sections of
cones.  The seminal monograph of Kre\u{\i}n and Rutman \cite{Krein}
still provides an excellent introduction to cones in linear
spaces.  A more recent treatment may be found
in~\cite{AliTou2007}.

Let $V$ be a real normed vector space with norm $\norm{\cdot}$,
and let $V^{*}$ be the dual space of continuous linear functionals
on $V$ under the operator norm, also denoted by $\norm{\cdot}$.
Let a cone $C \subset V$ be given.  Recall that the dual of $C$ is
the cone $\dualc{C} \subset \dual V$ defined by
\begin{equation*}
   \dualc{C} \deftobe \setof{\phi \in \dual V \st \text{$\phi(x)
   \ge 0$ for all $x \in C$}}.
\end{equation*}
Recall also the following well-known result regarding cones in
normed vector spaces.
\begin{proposition}
  Let $C$ be a cone in a normed vector space $V$.  Then, under
  the canonical embedding $V \hookrightarrow \ddual V$, we have
  that $\intr(C) \hookrightarrow \intr(\ddualc C)$.  Moreover,
  the closure of $C$ under this embedding is $\ddualc{C}$.
\end{proposition}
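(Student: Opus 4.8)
The plan is to argue directly from the definition of $\ddualc C$, writing $J \maps V \hookrightarrow \ddual V$ for the canonical embedding, so $J(x) = \hat x$ with $\hat x(\phi) = \phi(x)$. I would first record the trivial containment $J(C) \subseteq \ddualc C$: for $x \in C$ and $\phi \in \dualc C$ we have $\hat x(\phi) = \phi(x) \ge 0$. For the interior claim I would fix $x \in \intr C$ and pick $r > 0$ with $\setof{y \st \norm{y - x} < r} \subseteq C$. The crux is the estimate $\phi(x) \ge r\norm{\phi}$ for every $\phi \in \dualc C$: since $x + v \in C$ whenever $\norm{v} < r$, we get $\phi(x) \ge -\phi(v)$, and the supremum of $-\phi(v)$ over such $v$ equals $r\norm{\phi}$. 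I would then verify that the entire norm ball of radius $r$ about $\hat x$ in $\ddual V$ lies in $\ddualc C$: for $\norm{\Phi - \hat x} < r$ and any $\phi \in \dualc C$,
\[
   \Phi(\phi) = \phi(x) + (\Phi - \hat x)(\phi) \ge r\norm{\phi} - \norm{\Phi - \hat x}\,\norm{\phi} \ge 0 ,
\]
so $\Phi \in \ddualc C$. Hence $\hat x \in \intr(\ddualc C)$, which gives the first assertion.

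For the closure statement I would work in $\ddual V$ under the \weakstar{} topology $\sigma(\ddual V, \dual V)$, whose continuous dual is exactly $\dual V$. Writing $\ddualc C = \bigcap_{\phi \in \dualc C} \setof{\Phi \st \Phi(\phi) \ge 0}$ displays it as an intersection of \weakstar-closed half-spaces, hence a \weakstar-closed convex cone; as it contains $J(C)$, it contains the \weakstar{} closure $K$ of $J(C)$. For the reverse inclusion I would take $\Phi \notin K$ and separate it from the closed convex cone $K$ using Hahn--Banach in $(\ddual V, \sigma(\ddual V, \dual V))$. This produces a functional in the dual of this space, that is, some $\phi \in \dual V$, with $\Phi(\phi) < 0 \le \Psi(\phi)$ for all $\Psi \in K$ (the cone structure of $K$ forcing the separating constant to be $0$). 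Evaluating on $J(C) \subseteq K$ shows $\phi \in \dualc C$, and then $\Phi(\phi) < 0$ shows $\Phi \notin \ddualc C$. The two inclusions give $K = \ddualc C$.

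The step I expect to be the main obstacle is the closure direction, where the real content is topological bookkeeping rather than estimation. It is essential that the relevant closure is the \weakstar{} closure and that the continuous dual of $\ddual V$ under $\sigma(\ddual V, \dual V)$ is $\dual V$ itself, since this is exactly what forces the separating functional to lie in $\dual V$, and hence in $\dualc C$. If one instead used the norm closure of $J(C)$, the claimed equality could fail whenever $C$ is not already \weakstar{} closed, so pinning down the correct topology is what makes the separation argument close.
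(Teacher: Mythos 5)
Your proof is correct as written. There is, in fact, nothing in the paper to compare it against: the proposition is stated there as a ``well-known result'' with no proof and no citation, so your argument has to stand on its own, and it does. What you give is the standard bipolar-theorem argument. The interior half rests on the uniform estimate $\phi(x) \ge r\norm{\phi}$ for all $\phi \in \dualc C$ whenever $B_{r}(x) \subset C$, which is exactly what is needed to push the entire ball of radius $r$ about $\hat{x}$ into $\ddualc C$; your computation there is complete, and it correctly uses only that the canonical embedding pairs $\hat{x}$ with $\phi$ via $\phi(x)$ and that the bidual norm is the operator norm over $\dual V$. The closure half is the Hahn--Banach separation argument in $\parens{\ddual V, \sigma(\ddual V, \dual V)}$, and the two facts doing the real work --- that $\ddualc C$ is an intersection of \weakstar-closed half-spaces, and that the continuous dual of $\ddual V$ in the \weakstar{} topology is $\dual V$ itself, so the separating functional is forced to lie in $\dualc C$ --- are both standard and correctly deployed, including the normalization of the separating constant to $0$, which uses that the \weakstar{} closure of $J(C)$ is a cone containing the origin. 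Your closing remark is also the right one, and it is the only point at which the proposition as printed needs interpreting: ``closure'' must be read as \weakstar{} closure. With the norm closure the statement is false in general; for instance, taking $C = V$ with $V$ a nonreflexive Banach space gives $\dualc C = \setof{0}$ and $\ddualc C = \ddual V$, while the norm closure of $J(V)$ is the proper closed subspace $J(V)$. (In the paper's one essential use of the closure statement, in the proof of Theorem \ref{thm:CSSTheorem}, the space $\dual V$ has by that point been shown to be a Hilbert space, so $\ddual V$ is reflexive and the distinction between the two closures disappears; but for a general normed space $V$ the \weakstar{} reading is the only one that makes the proposition true.)
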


We will find the following notation to be convenient: Given points
$x \in V$ and $\phi \in \dual V$, let $ S_x(\dualc C) $ and $
S_\phi(C) $ be sections of $ C^{*} $ and $ C $, respectively,
defined as follows
\begin{align*}
   S_{x}(\dualc C) & \deftobe \setof{\psi \in \dualc C \st \psi(x)
   = 1},  \\
   S_{\phi}(C) & \deftobe \setof{y \in C \st \phi(y) = 1}.
\end{align*}

More generally, we will occasionally need a canonical affine
subspace in $V$ or $\dual V$ that is perpendicular to a given
affine subspace of $\dual V$ or $V$, respectively.  Recall that,
if $L \subset V$ and $M \subset \dual{V}$ are linear subspaces,
then the \emph{annihilators} of $L$ and $M$ are defined by
\begin{align*}
   L^{\perp} & \deftobe \setof{\phi \in \dual{V} \st
   \text{$\phi(x) = 0$ for all $x \in L$}},\\
   \coperp{M} & \deftobe \setof{y \in V \st \text{$\psi(y) = 0$
   for all $\psi \in M$}}.
\end{align*}
Analogously, given an affine subspace $A \subset V$, respectively
$B \subset \dual{V}$, bounded away from the origin, define the
\emph{perpendicular affine spaces}
\begin{align*}
   A^{\perp} & \deftobe \setof{\phi \in \dual{V} \st
   \text{$\phi(x) = 1$ for all $x \in A$}},\\
   \coperp{B} & \deftobe \setof{y \in V \st \text{$\psi(y) = 1$
   for all $\psi \in B$}}.
\end{align*}

It is well known that, if $L \subset V$ is a closed linear
subspace and if $M \subset \dual{V}$ is a \weakstar-closed linear
subspace, then the annihilators satisfy a duality relation:
$\coperp{(L^{\perp})} = L$ and $(\coperp{M})^{\perp} = M$.

\begin{lemma}
   \label{lem:PerpIsADuality}
   Let $V$ be a normed vector space.  Let $A \subset V$ and $B
   \subset \dual{V}$ be closed affine subspaces such that $0
   \notin A$, $0 \notin B$, and $\codim(A)$ and $\dim(B)$ are
   finite.  Then $\coperp{(A^{\perp})} = A$ and
   $(\coperp{B})^{\perp} = B$.
\end{lemma}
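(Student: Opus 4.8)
The plan is to reduce the affine duality to the linear annihilator duality recalled above, which holds for closed subspaces of $V$ and for \weakstar-closed subspaces of $\dual V$. I would prove the first identity $\coperp{(\perpto{A})} = A$ in full and then obtain the second, $\perpto{(\coperp{B})} = B$, by the mirror-image argument with the roles of $V$ and $\dual V$ interchanged. First I would fix a basepoint $x_{0} \in A$ and write $A = x_{0} + L$, where $L \deftobe A - x_{0}$ is a closed linear subspace and the hypothesis $0 \notin A$ is equivalent to $x_{0} \notin L$. Unwinding the definition, a functional $\phi$ lies in $\perpto{A}$ precisely when $\phi \in \perpto{L}$ and $\phi(x_{0}) = 1$; thus $\perpto{A}$ is the slice of the annihilator $\perpto{L}$ cut out by $\phi(x_{0}) = 1$, with direction space $M \deftobe \setof{\eta \in \perpto{L} \st \eta(x_{0}) = 0} = \perpto{(L + \R x_{0})}$. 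I would first check that $\perpto{A} \ne \emptyset$: since $L$ is closed, $\coperp{(\perpto{L})} = L$, so $x_{0} \notin L$ forces some $\phi_{0} \in \perpto{L}$ with $\phi_{0}(x_{0}) \ne 0$, and rescaling gives $\phi_{0} \in \perpto{A}$. Hence $\perpto{A} = \phi_{0} + M$.

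With this description, the inclusion $A \subseteq \coperp{(\perpto{A})}$ is immediate. For the reverse inclusion I would take $y \in \coperp{(\perpto{A})}$, so $\phi(y) = 1$ for every $\phi \in \perpto{A}$. Evaluating on $\phi_{0}$ and on each $\phi_{0} + \eta$ with $\eta \in M$ yields simultaneously $\phi_{0}(y) = 1$ and $\eta(y) = 0$ for all $\eta \in M$; the latter says $y \in \coperp{M} = \coperp{(\perpto{(L + \R x_{0})})}$. The decisive point is that $L + \R x_{0}$, being a closed subspace plus a line, is itself closed, so the recalled duality gives $\coperp{(\perpto{(L + \R x_{0})})} = L + \R x_{0}$. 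Thus $y = \ell + t x_{0}$ for some $\ell \in L$ and $t \in \R$, and then $1 = \phi_{0}(y) = t\,\phi_{0}(x_{0}) = t$ forces $t = 1$, so $y = x_{0} + \ell \in A$.

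For the second identity I would argue symmetrically. Writing $B = \psi_{0} + N$ with $\psi_{0} \in B$ and $N \deftobe B - \psi_{0}$ a finite-dimensional subspace of $\dual V$ satisfying $\psi_{0} \notin N$, one has $\coperp{B} = \setof{y \in \coperp{N} \st \psi_{0}(y) = 1}$, which is nonempty because $\psi_{0} \notin N$ (a finite linear-algebra argument produces a $y$ annihilated by $N$ with $\psi_{0}(y) = 1$), and its direction is $\coperp{(N + \R \psi_{0})}$. The inclusion $B \subseteq \perpto{(\coperp{B})}$ is again immediate, and for the reverse inclusion the key fact is that the finite-dimensional subspace $N + \R \psi_{0} \subset \dual V$ is automatically \weakstar-closed, whence $\perpto{(\coperp{(N + \R \psi_{0})})} = N + \R \psi_{0}$; the bookkeeping that pins down the coefficient of $\psi_{0}$ is identical to the first case.

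I expect the only real subtlety to be ensuring that $L + \R x_{0}$ and $N + \R \psi_{0}$ lie within the scope of the recalled duality. On the $V$-side this is automatic, since a closed subspace plus a line is closed. On the $\dual V$-side it is exactly where the hypothesis $\dim(B) < \infty$ is indispensable: a finite-dimensional subspace of $\dual V$ is \weakstar-closed, whereas the identity $\perpto{(\coperp{M})} = M$ can fail when $M$ is not \weakstar-closed. Everything else is routine affine bookkeeping.
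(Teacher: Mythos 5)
Your proof is correct and follows essentially the same route as the paper's: both translate $A$ and $B$ by base points to reduce to the classical annihilator dualities $\coperp{(L^{\perp})} = L$ (for closed $L \subset V$) and $(\coperp{M})^{\perp} = M$ (for \weakstar-closed $M \subset \dual{V}$), applied to $L + \R x_{0}$ and $N + \R \psi_{0}$ respectively. The paper's version is terser---it simply observes the decompositions $A^{\perp} = (L^{\perp} \cap x_{0}^{\perp}) + \phi_{0}$ and $\coperp{B} = (\coperp{M} \cap \coperp{\psi_{0}}) + y_{0}$ and cites the linear facts---whereas you also verify the points it leaves implicit (nonemptiness of $A^{\perp}$ and $\coperp{B}$, closedness of $L + \R x_{0}$, and \weakstar-closedness of the finite-dimensional $N + \R \psi_{0}$), which is where the hypotheses $0 \notin A$, $0 \notin B$, and $\dim(B) < \infty$ actually enter.
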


\begin{proof}
   Fix $x_{0} \in A$, $\phi_{0} \in A^{\perp}$, $\psi_{0} \in B$,
   and $y_{0} \in \coperp{B}$.  Let $L \deftobe A - x_{0}$ and $M
   \deftobe B - \psi_{0}$.  Write $x_{0}^{\perp}$ and
   $\coperp{\psi_{0}}$ for the annihilator of the linear span of
   $x_{0}$ and $\psi_{0}$, respectively.  Observe that $A^{\perp}
   = (L^{\perp} \cap x_{0}^{\perp}) + \phi_{0}$ and $\coperp{B} =
   (\coperp{M} \cap \coperp{\psi_{0}}) + y_{0}$.  The equations to
   be proved now follow from the analogous facts about
   annihilators mentioned above.
\end{proof}

\begin{definition}
   A convex set $K \subset V$ is \emph{linearly bounded} if the
   intersection of $K$ with every affine line is a bounded line
   segment.  Equivalently, a linearly bounded convex set contains
   no ray.
\end{definition}

The following lemma establishes a natural relationship between the
non\-empty linearly bounded sections of $ C $ and the dual cone $
C^{*} $.  In particular, the dual cone of $ C$ is the set of
elements of $ \dual V $ that are positive on a linearly bounded
section of $ C $.

\begin{lemma}
   \label{lem:LinearBoundedSectionGeneratesCone}
   Let $C$ be a pointed cone in a normed vector space $V$, and let
   $\phi \in \dual V$ be such that $S_{\phi} \deftobe S_{\phi}(C)$
   is nonempty and linearly bounded.  Then $\dualc C = \setof{\psi
   \in \dual V \st \psi(S_{\phi}) \ge 0}$.
\end{lemma}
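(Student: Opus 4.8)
The plan is to prove the claimed equality by establishing the two inclusions, with the essential content being the structural fact that $C = \cone(S_{\phi})$. The inclusion $\dualc C \subseteq \setof{\psi \in \dual V \st \psi(S_{\phi}) \ge 0}$ is immediate: since $S_{\phi} \subseteq C$, any functional that is nonnegative on all of $C$ is in particular nonnegative on $S_{\phi}$. The work therefore lies in the reverse inclusion, and for this I would first show that $C$ is exactly the cone generated by its section $S_{\phi}$.

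To see that $C = \cone(S_{\phi})$, I would argue that $\phi(x) > 0$ for every nonzero $x \in C$. The key observation is that the linear boundedness of $S_{\phi}$ forbids $C$ from containing any nonzero vector in $\ker\phi$: if $d \in C$ with $d \ne 0$ and $\phi(d) = 0$, then fixing any $y \in S_{\phi}$ (which exists since $S_{\phi} \ne \emptyset$), the points $y + t d$ lie in $C$ for all $t \ge 0$ (as $C$ is a convex cone, hence closed under addition and nonnegative scaling) and satisfy $\phi(y + t d) = 1$, so they form a ray inside $S_{\phi}$, contradicting linear boundedness. This rules out $\phi(x) = 0$ for nonzero $x \in C$. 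To rule out $\phi(x) < 0$, I would reduce to the previous case: given such an $x$, the vector $d \deftobe x - \phi(x) y$ lies in $C$ and satisfies $\phi(d) = 0$, and pointedness forces $d \ne 0$ (otherwise $x = \phi(x) y$ would be a negative multiple of $y$, placing a nonzero vector in $C \cap (-C)$), again producing a forbidden ray. Hence every nonzero $x \in C$ has $\phi(x) > 0$, so $x = \phi(x) \cdot \bigl(x / \phi(x)\bigr)$ with $x / \phi(x) \in S_{\phi}$; this gives $C \subseteq \cone(S_{\phi})$, and the reverse containment is automatic.

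With $C = \cone(S_{\phi})$ in hand, the reverse inclusion becomes routine: if $\psi(S_{\phi}) \ge 0$, then for any $x \in C$ we may write $x = \lambda y$ with $\lambda \ge 0$ and $y \in S_{\phi}$, whence $\psi(x) = \lambda \psi(y) \ge 0$, so $\psi \in \dualc C$. I expect the main obstacle to be the case $\phi(x) < 0$, where pointedness must be invoked to guarantee that the auxiliary vector $d$ is nonzero: without pointedness, $C$ could contain a line through the origin and the conclusion would fail.
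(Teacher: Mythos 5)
Your proof is correct and follows essentially the same route as the paper's: establish that $\phi$ is strictly positive on $C \setminus \setof{0}$ (using pointedness and the linear boundedness of $S_{\phi}$ to produce a forbidden ray otherwise), then rescale an arbitrary nonzero $x \in C$ into $S_{\phi}$ to get the reverse inclusion. If anything, you are more explicit than the paper, which compresses the case $\phi(x) < 0$ into the single assertion that pointedness yields a nonzero $v \in C$ with $\phi(v) = 0$; your auxiliary vector $d = x - \phi(x)\,y$ supplies exactly that omitted detail.
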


\begin{proof}
   We first show that $\phi(C \setminus \setof{0}) > 0$.  For,
   suppose otherwise.  Then, since $C$ is pointed, there exists a
   $v \in C \setminus \setof{0}$ such that $\phi(v) = 0$.  Fix $w
   \in S_{\phi}$.  Then, for every $\lambda \ge 0$, we have that
   $w + \lambda v \in C$ and $\phi(w + \lambda v) = 1$, so $w +
   \lambda v \in S_{\phi}$, which implies that $S_{\phi}$ is not
   linearly bounded.
   
   Now, suppose that $\psi(S_{\phi}) \ge 0$, and let $y \in C$ be
   given.  If $y = 0$, then we immediately have that $\psi(y) \ge
   0$.  If $y \in C \setminus \setof{0}$, then $\phi(y) > 0$, so
   $\frac{1}{\phi(y)}y \in S_{\phi}$.  Thus,
   $\psi(\frac{1}{\phi(y)}y) \ge 0$, so $\psi(y) \ge 0$.
   Therefore, $\psi \in \dualc C$, as claimed.  Since the converse
   claim is immediate, the lemma is proved.
\end{proof}

Furthermore, there is a well-known relationship between the
bounded sections of $ C $ and the interior points of $\dualc C$.
(Cf.~\cite[Theorem 3.8.4]{Jameson}.)  Given a point $v$ in $V$ or
$\dual V$, we write $B_{r}(v)$ to denote the closed ball of radius
$r$ centered at $v$.

\begin{lemma}
   \label{lem:InteriorNormalsGiveBoundedSections}
   Let $C$ be a pointed cone in a normed vector space $V$.  Given
   a nonzero functional $\phi \in \dual V$, the section $S_{\phi}
   \deftobe S_{\phi}(C)$ is bounded if and only if $\phi \in
   \intr(\dualc C)$.  \textup{(}More precisely, given $ r > 0$,
   we have that $S_{\phi} \subset B_{r}(0)$ if and only if
   $B_{1/r}(\phi) \subset \dualc C$.\textup{)}
\end{lemma}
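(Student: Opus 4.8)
The plan is to prove the quantitative parenthetical statement directly, since the stated biconditional follows from it by choosing the radius appropriately. I will establish the two inclusions ``$B_{1/r}(\phi) \subset \dualc C \implies S_{\phi} \subset B_{r}(0)$'' and ``$S_{\phi} \subset B_{r}(0) \implies B_{1/r}(\phi) \subset \dualc C$'' separately. The biconditional then follows because $\phi \in \intr(\dualc C)$ means precisely that $B_{1/r}(\phi) \subset \dualc C$ for some $r > 0$, while $S_{\phi}$ being bounded means precisely that $S_{\phi} \subset B_{r}(0)$ for some $r > 0$. Throughout I may assume $S_{\phi}$ is nonempty, since otherwise it is a section only in the degenerate sense; the nonemptiness is needed only for the second inclusion.

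The first inclusion is a Hahn--Banach duality argument and does not use pointedness. Suppose $B_{1/r}(\phi) \subset \dualc C$ and fix $y \in S_{\phi}$, so $y \in C$ and $\phi(y) = 1$. For every $\eta \in \dual V$ with $\norm{\eta} \le 1/r$, both $\phi + \eta$ and $\phi - \eta$ lie in $B_{1/r}(\phi) \subset \dualc C$, so evaluating at $y \in C$ gives $1 \pm \eta(y) \ge 0$, whence $\abs{\eta(y)} \le 1$. Taking the supremum over all such $\eta$ and using the Hahn--Banach identity $\norm{y} = \sup\setof{\abs{\eta(y)} \st \norm{\eta} \le 1}$, I obtain $\tfrac{1}{r}\norm{y} \le 1$, that is, $\norm{y} \le r$. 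Hence $S_{\phi} \subset B_{r}(0)$.

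For the reverse inclusion, the key step---and the place where pointedness enters---is a linear lower bound on $\phi$ over $C$. Since $S_{\phi} \subset B_{r}(0)$ is bounded, it is in particular linearly bounded, so the argument in the proof of Lemma~\ref{lem:LinearBoundedSectionGeneratesCone} shows that $\phi(x) > 0$ for every $x \in C \setminus \setof{0}$. Consequently, for each such $x$ the point $x/\phi(x)$ lies in $S_{\phi} \subset B_{r}(0)$, giving $\norm{x}/\phi(x) \le r$, i.e.\ $\phi(x) \ge \tfrac{1}{r}\norm{x}$; this also holds trivially at $x = 0$. Now take any $\psi \in B_{1/r}(\phi)$ and any $y \in C$: then
\[
   \psi(y) = \phi(y) + (\psi - \phi)(y) \ge \phi(y) - \norm{\psi - \phi}\,\norm{y} \ge \tfrac{1}{r}\norm{y} - \tfrac{1}{r}\norm{y} = 0,
\]
so $\psi \in \dualc C$. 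Hence $B_{1/r}(\phi) \subset \dualc C$.

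I expect the main obstacle to be isolating and justifying the linear lower bound $\phi(x) \ge \tfrac{1}{r}\norm{x}$ on $C$, which rests on the strict positivity of $\phi$ on $C \setminus \setof{0}$; this is exactly where the pointedness hypothesis is indispensable, since without it a bounded $S_{\phi}$ need not force $\phi$ to be positive on all of $C$. The remaining ingredients---the Hahn--Banach formula for the norm and the elementary estimate above---are routine.
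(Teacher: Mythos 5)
Your proof is correct and takes essentially the same approach as the paper's: the implication $S_{\phi} \subset B_{r}(0) \Rightarrow B_{1/r}(\phi) \subset \dualc{C}$ uses pointedness through Lemma~\ref{lem:LinearBoundedSectionGeneratesCone} (you inline its strict-positivity argument as the quantitative bound $\phi(x) \ge \norm{x}/r$, where the paper instead checks $\psi(S_{\phi}) \ge 0$ and cites that lemma as a black box), and the reverse implication is the paper's Hahn--Banach argument, with your supremum formula $\norm{y} = \sup\setof{\abs{\eta(y)} \st \norm{\eta} \le 1}$ playing the role of the paper's single norming functional $\nu$. Your explicit restriction to nonempty $S_{\phi}$ is consistent with the paper's conventions, since sections are defined there to be nonempty.
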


\begin{proof}
   Suppose that $S_{\phi}$ is bounded.  Let $r > 0$ be such that
   $S_{\phi} \subset B_{r}(0)$, and put $\epsilon \deftobe 1/r$.
   Let $\psi \in B_{\epsilon}(\phi)$ be given.  Observe that, for
   all $x \in S_{\phi}$, we have that $\abs{1 - \psi(x)} =
   \abs{\phi(x) - \psi(x)} \le \norm{\phi - \psi} \norm{x} \le
   \epsilon r = 1$, and hence $\psi(x) \ge 0$.  Since $S_{\phi}$
   is linearly bounded, it follows from Lemma
   \ref{lem:LinearBoundedSectionGeneratesCone} that $\psi \in
   \dualc C$, so $\phi \in \intr(\dualc C)$.
   
   Conversely, suppose that $\phi \in \intr(\dualc C)$.  Let
   $\epsilon > 0$ be such that $B_{\epsilon}(\phi) \subset \dualc
   C$, and put $r \deftobe 1/\epsilon$.  Fix $x \in S_{\phi}$.
   Since the unit ball $B_{1}(0)$ has a supporting hyperplane at
   $x / \norm{x}$, there exists a functional $\nu \in \dual V$
   such that $\nu(x / \norm{x}) = 1$ and $\nu(B_{1}(0)) \le 1$.
   That is, $\nu(x) = \norm{x}$ and $\norm{\nu} = 1$.  Set $\psi
   \deftobe \phi - \epsilon \nu$.  Then $\norm{\phi - \psi} =
   \epsilon$, so $\psi \in \dualc C$, and hence $\psi(x) \ge 0$.
   Thus, $\epsilon \norm{x} = \phi(x) - \psi(x) = 1 - \psi(x) \le
   1$, so $\norm{x} \le 1/\epsilon = r$, yielding the claim.
\end{proof}

It is well known that if $ X $ is a separable Banach space, then
every pointed cone $ C $ has a base, i.e.~there is a closed
bounded convex subset $ B \subset X $ such that, for every $ x \in
C\setminus\setof{0} $, there exist unique $ \lambda > 0 $ and $ y
\in B $ such that $ x = \lambda y $.  The separability assumption
is indispensable, as demonstrated by the standard example of the
cone $ C $ of all non-negative real-valued functions on $ X =
\ell^2 (I) $ with respect to the counting measure, where $I$ is an
uncountable set.  For this case, the set of all positive
continuous linear functionals on $ C $ is isometrically isomorphic
to $ C $, but $ C^{*} $ has no strictly positive linear
functionals, and so $ \dualc{C} $ has no interior.  It readily
follows that $ C $ has no base \cite{Jameson}.  It is also worth
noting that pointed cones in separable Banach spaces may not have
a bounded base.  For example, if $ X = C[0,1] $ with its usual
uniform topology, then $ X $ is separable.  If $ C $ is the cone
of nonnegative valued functions in $ X $, then $C$ has a base,
but, since $ C^{*} $ is the set of regular positive Borel measures
on $ [0,1] $, $\dual C$ has an empty interior.  Thus $ C $ has no
bounded base.

We do not assume that our normed vector space $V$ is separable, so
there may exist pointed cones without bounded bases.  However,
condition (2) in the definition of CSS cones (Definition
\ref{defn:CSSCones}) guarantees that this is not the case with a
CSS cone, because having a bounded proper section implies having a
bounded base, as may be seen in the proof of Lemma
\ref{lem:LinearBoundedSectionGeneratesCone}.  In particular, the
dual of a CSS $C$ cone always has a nonempty interior.  It follows
from a series of results due to Borwein and Lewis that the
functionals in the interior of $\dualc C$ are precisely the
functionals that are strictly positive on $C \setminus \setof{0}$.

\begin{lemma}
   \label{lem:BoundedSectionsStrictlySupport}
   Let $C$ be a closed pointed cone in a normed vector space $V$
   such that the dual cone $\dual{C}$ has nonempty interior, and
   let $\phi \in \dual V$.  Then $S_{\phi} \deftobe S_{\phi}(C)$
   is bounded if and only if $\phi$ is strictly positive on
   $C\setminus\setof{0}$.
\end{lemma}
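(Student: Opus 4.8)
The plan is to present this lemma as the composite of two facts already in hand. For nonzero $\phi$, Lemma~\ref{lem:InteriorNormalsGiveBoundedSections} equates boundedness of $S_{\phi}$ with the interior condition $\phi \in \intr(\dualc C)$, and the results of Borwein and Lewis quoted just above equate that interior condition with strict positivity of $\phi$ on $C \setminus \setof{0}$; transitivity of
\[
   \text{$S_{\phi}$ bounded}
   \iff \phi \in \intr(\dualc C)
   \iff \text{$\phi$ strictly positive on $C \setminus \setof{0}$}
\]
is then the whole lemma. Before quoting Lemma~\ref{lem:InteriorNormalsGiveBoundedSections}, I would dispose of the degenerate case $\phi = 0$: then $S_{\phi} = \emptyset$, which is not a section, and $0$ is not strictly positive, so neither side holds and I may assume $\phi \ne 0$.

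For the forward implication it is worth recording a self-contained argument that bypasses the dual and isolates the role of pointedness. If $S_{\phi}$ is bounded then it is in particular linearly bounded, so the opening paragraph of the proof of Lemma~\ref{lem:LinearBoundedSectionGeneratesCone} applies verbatim: if some nonzero $v \in C$ had $\phi(v) = 0$, then for any fixed $w \in S_{\phi}$ the whole ray $\setof{w + \lambda v \st \lambda \ge 0}$ would lie in $S_{\phi}$, contradicting linear boundedness. Hence $\phi(C \setminus \setof{0}) > 0$, which is exactly strict positivity.

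The reverse implication is the substantive half and the one place I expect a real obstacle. It cannot be reached by naive compactness: choosing points $y_{n}$ on a bounded base of $C$ with $\phi(y_{n}) \to 0$, in infinite dimensions the $y_{n}$ need admit no norm-convergent subsequence, so pointwise strict positivity does not by itself forbid an unbounded $S_{\phi}$. Closing this gap is exactly the work done by the Borwein--Lewis machinery, and it is precisely here that the hypothesis $\intr(\dualc C) \ne \emptyset$ is indispensable: under that hypothesis those results upgrade strict positivity of $\phi$ on $C \setminus \setof{0}$ to membership $\phi \in \intr(\dualc C)$, whereupon Lemma~\ref{lem:InteriorNormalsGiveBoundedSections} returns boundedness of $S_{\phi}$. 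I would flag in the write-up that, without this hypothesis (as for the counting-measure cone discussed earlier), strictly positive functionals can fail to exist altogether, so the equivalence genuinely depends on it.
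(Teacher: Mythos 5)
Your proposal is correct and takes essentially the same route as the paper's own proof: it composes Lemma~\ref{lem:InteriorNormalsGiveBoundedSections} (boundedness of $S_{\phi}$ $\iff$ $\phi \in \intr(\dualc C)$) with the Borwein--Lewis results (given $\intr(\dualc C) \ne \emptyset$, that interior condition $\iff$ strict positivity on $C \setminus \setof{0}$). Your extra remarks --- disposing of $\phi = 0$ and the self-contained pointedness argument for the forward direction --- are sound refinements of details the paper leaves implicit, but they do not change the argument.
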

\begin{proof}
   By Lemma \ref{lem:InteriorNormalsGiveBoundedSections},
   $S_{\phi}$ is bounded if and only if $\phi \in
   \intr(\dual{C})$.  Since $\intr(\dual{C}) \ne \emptyset$, it
   follows from \cite[Corollary 2.14]{BorLew1992} and
   \cite[Theorem 3.10]{BorLew1992} that $\phi \in \intr{\dual{C}}$
   if and only if $\phi(C\setminus{0}) > 0$.
\end{proof}

\begin{lemma}
   \label{lem:PerpOfBoundedSectionIsBounded}
   Let $C$ be a pointed cone in a normed vector space $V$.  If
   $\dualc{S}$ is a bounded finite-dimensional section of $\dualc
   C$ that intersects the interior of $\dualc C$, then the section
   \begin{equation*}
      S \deftobe \setof{x \in C \st \text{for all $\phi \in 
      \dualc{S}$, $\phi(x) = 1$}}
   \end{equation*}
   of $C$ is bounded, finite codimensional, and intersects the
   interior of $C$.
\end{lemma}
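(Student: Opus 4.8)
The plan is to fix a functional $\phi_{0} \in \dualc S \cap \intr(\dualc C)$, which exists by hypothesis, set $B \deftobe \aff(\dualc S)$, and treat the three conclusions separately. Since an $x$ satisfies $\phi(x) = 1$ for all $\phi \in \dualc S$ if and only if it does so for all $\phi$ in the affine hull $B$, we have $S = C \cap \coperp B$. Boundedness is then immediate: every $x \in S$ satisfies $\phi_{0}(x) = 1$, so $S \subseteq S_{\phi_{0}}(C)$, and the latter section is bounded by Lemma~\ref{lem:InteriorNormalsGiveBoundedSections} because $\phi_{0} \in \intr(\dualc C)$.

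For finite codimension, I first check that $0 \notin B$. If instead $0 \in B$, then, as $\phi_{0} \in B$ and $B$ is affine, the entire line $\setof{s\phi_{0} \st s \in \R}$ lies in $B$; but $\setof{s\phi_{0} \st s \ge 1} \subseteq \dualc C$, so this ray lies in $\dualc C \cap B = \dualc S$, contradicting the boundedness of $\dualc S$. With $0 \notin B$ and $\dim B$ finite, Lemma~\ref{lem:PerpIsADuality} shows that $\coperp B$ is a genuine closed affine subspace with $(\coperp B)^{\perp} = B$; its codimension equals $\dim(\lin B) = \dim B + 1$, so $S \subseteq \coperp B$ is finite-codimensional.

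The substantive part is that $S$ meets $\intr C$, i.e.\ that $\coperp B \cap \intr C \ne \emptyset$. I argue by contradiction. If the affine set $\coperp B$ is disjoint from the nonempty open convex set $\intr C$, the Hahn--Banach separation theorem yields a nonzero $\rho \in \dual V$ separating them. Applying this to $\rho(\lambda x)$ for $x \in \intr C$ and letting $\lambda \to 0^{+}$ and $\lambda \to \infty$ forces, after replacing $\rho$ by $-\rho$ if necessary, that $\rho \ge 0$ on $\intr C$ and hence (by continuity, since $C \subseteq \overline{\intr C}$) that $\rho \in \dualc C$; the same separation makes $\rho$ constant on $\coperp B$, with value $t \le 0$. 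Being constant on $\coperp B$, the functional $\rho$ annihilates the direction space $\coperp{(\lin B)}$, so $\rho \in (\coperp{(\lin B)})^{\perp} = \lin B = \lin(\dualc S)$, the last equality holding because a finite-dimensional subspace of $\dual V$ is \weakstar-closed. Writing $\rho = t\phi_{0} + m$ with $m \in B - \phi_{0}$, we then have $m = \rho + \abs{t}\phi_{0} \in \dualc C$ as a sum of elements of $\dualc C$; moreover $m \ne 0$, since $m = 0$ would give $\rho = t\phi_{0}$ with $t < 0$, hence $-\phi_{0} \in \dualc C$, contradicting the strict positivity of $\phi_{0}$ on $C \setminus \setof{0}$ furnished by Lemma~\ref{lem:BoundedSectionsStrictlySupport}. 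But then $\phi_{0} + \lambda m \in \dualc C \cap B = \dualc S$ for all $\lambda \ge 0$, an unbounded ray in $\dualc S$---the desired contradiction.

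I expect the interior-meeting claim to be the main obstacle; boundedness and finite codimension are formal once $0 \notin B$ is in hand. The delicate points there are converting the boundedness of the \emph{dual} section $\dualc S$ into a rigidity statement that excludes a separating functional, and the identification $\rho \in \lin(\dualc S)$, which rests on the \weakstar-closedness of finite-dimensional subspaces together with the affine annihilator duality of Lemma~\ref{lem:PerpIsADuality}. It is also in this step that the nonemptiness of $\intr C$ is needed, to supply the open convex set for the separation argument.
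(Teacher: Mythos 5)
Your proof is correct, and at its core it is the same argument as the paper's: write $S = C \cap \coperp{B}$ with $B \deftobe \aff(\dualc{S})$, get boundedness from Lemma \ref{lem:InteriorNormalsGiveBoundedSections}, get finite codimension from the annihilator duality of Lemma \ref{lem:PerpIsADuality}, and prove $S \cap \intr(C) \ne \emptyset$ by contradiction, converting a separating functional into an unbounded ray inside the bounded section $\dualc{S}$.

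One difference is substantive, and it is in your favor. At the separation step the paper asserts that $\coperp{B} \cap \intr(C) = \emptyset$ yields a supporting hyperplane of $C$ of the form $H = \ker\psi$ containing $\coperp{B}$, i.e.\ a \emph{linear} hyperplane. That assertion is not automatic: Hahn--Banach gives only an affine hyperplane $\setof{\rho = t}$ containing $\coperp{B}$, with $\rho \in \dualc{C}$ and $t \le 0$ after orienting, and in general an affine set disjoint from the interior of a cone need not lie in any linear supporting hyperplane (consider the point $(-1,-1)$ and the first quadrant in $\R^{2}$). Your argument works with exactly this weaker conclusion: you keep the constant $t \le 0$, form the correction $m = \rho - t\phi_{0} \in \dualc{C}$, rule out $m = 0$, and use $\phi_{0} + \lambda m$ as the unbounded ray in $\dualc{C} \cap B = \dualc{S}$, where the paper uses $\phi + \lambda\psi$. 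So your write-up supplies a step that the paper elides (the lemma itself is of course still true). The remaining differences are cosmetic: you get boundedness at once from $S \subseteq S_{\phi_{0}}(C)$, while the paper routes both boundedness and the codimension count through functionals $\phi_{0}, \dotsc, \phi_{n}$ affinely spanning $\aff(\dualc{S})$. Two points to tighten: your separation argument needs $\coperp{B} \ne \emptyset$, which does follow from $0 \notin B$ and $\dim B < \infty$ but deserves a sentence; and your appeal to Lemma \ref{lem:BoundedSectionsStrictlySupport} to exclude $-\phi_{0} \in \dualc{C}$ imports a closedness hypothesis on $C$ that the present lemma does not assume---it is cleaner to note that $-\phi_{0} \in \dualc{C}$ together with $\phi_{0} \in \intr(\dualc{C})$ would put $0$ in $\intr(\dualc{C})$, forcing $\dualc{C} = \dual{V}$ and hence $C = \setof{0}$, contradicting $\intr(C) \ne \emptyset$.
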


\begin{proof}
   We first prove that $S \cap \intr(C) \ne \emptyset$.  Let $B =
   \aff(\dualc S)$.  Observe that $S = \coperp{B} \cap C$, and
   suppose, to get a contradiction, that $\coperp{B} \cap \intr(C)
   = \emptyset$.  Then there exists a supporting hyperplane $H$ of
   $C$ containing $\coperp{B}$.  Let $\psi \in \dualc C
   \setminus\setof{0}$ be such that $H = \ker \psi$ and $\psi(C)
   \ge 0$.  Fix $\phi \in \dualc S$.  Then $\phi + \lambda\psi \in
   \dualc C$ for all $\lambda \ge 0$.  Moreover, $\phi + \lambda
   \psi \in (\coperp(B))^{\perp} = B$ by Lemma
   \ref{lem:PerpIsADuality}.  Hence, $\phi + \lambda\psi \in
   \dualc{S}$ for all $\lambda \ge 0$, so $\dualc S$ is not
   bounded, a contradiction.
   
   Let $n \deftobe \dim(\duala S)$.  Since $\duala S$ intersects
   the interior of $\dualc C$, there exist linear functionals
   $\phi_{0}, \dotsc, \phi_{n} \in \intr(\duala S)$ that affinely
   span $\aff(\duala S)$.  It is easy to see that $S =
   \bigcap_{i=0}^{n} S_{\phi_{i}}$.  Indeed, since $S \cap
   \intr(C) \ne \emptyset$, the affine span of $S$ is the 
   intersection of the affine spans of the $S_{\phi_{i}}$:
   \begin{equation*}
      \aff(S) = \bigcap_{i=0}^{n} \aff(S_{\phi_{i}})
   \end{equation*}
   Therefore, $S$ is of co-dimension $\dim(\duala S) + 1$.
   Finally, $S$ is bounded by
   Lemma~\ref{lem:InteriorNormalsGiveBoundedSections}.
\end{proof}

By Lemma \ref{lem:InteriorNormalsGiveBoundedSections}, a bounded
proper section $S$ of a pointed cone $C$ corresponds to an
interior point of $\dualc C$.  This correspondence interchanges
dimension and codimension, since it is the correspondence between
a hyperplane and a vector normal to that hyperplane.  However, if
the bounded proper section $S$ is centrally symmetric, then there
is a canonical corresponding proper section of $\dualc C$, which
is also centrally symmetric.

\begin{lemma}
   \label{lem:NormalOfCentSymmSectionIsCentOfSymm}
   Let $C$ be a pointed cone in a normed vector space $V$.
   Suppose that $x \in C$ and $\phi \in \dualc C$ are such that
   $S_{\phi} \deftobe S_{\phi}(C)$ is bounded and centrally
   symmetric about $x$.  Then $S_{x} \deftobe S_{x}(\dualc C)$ is
   centrally symmetric about $\phi$.
\end{lemma}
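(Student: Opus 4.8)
The plan is to verify directly that the section $S_{x}$ of $\dualc C$ is invariant under the point reflection $\psi \mapsto 2\phi - \psi$ and contains $\phi$, which is exactly the assertion that $S_{x}$ is centrally symmetric about $\phi$. The first step I would carry out is to check that $\phi$ itself lies in $S_{x}$, i.e.\ that $\phi(x) = 1$. Since $S_{\phi}$ is nonempty, pick some $y_{0} \in S_{\phi}$; central symmetry about $x$ gives $2x - y_{0} \in S_{\phi}$, and applying $\phi$ yields $2\phi(x) - \phi(y_{0}) = 1$. As $\phi(y_{0}) = 1$, this forces $\phi(x) = 1$, so $\phi \in S_{x}$ and in particular $S_{x} \ne \emptyset$.

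Next I would fix an arbitrary $\psi \in S_{x}$ and show $2\phi - \psi \in S_{x}$. The affine condition is immediate, since $(2\phi - \psi)(x) = 2\phi(x) - \psi(x) = 2 - 1 = 1$. The substantive point is that $2\phi - \psi \in \dualc C$, i.e.\ that $\psi(y) \le 2\phi(y)$ for every $y \in C$. For $y = 0$ this is trivial. For $y \in C \setminus \setof{0}$, the key input is that $\phi$ is strictly positive on $C \setminus \setof{0}$: because $S_{\phi}$ is bounded, Lemma \ref{lem:InteriorNormalsGiveBoundedSections} gives $\phi \in \intr(\dualc C)$, whence Lemma \ref{lem:BoundedSectionsStrictlySupport} yields $\phi(y) > 0$. (Equivalently, this strict positivity is exactly what the first paragraph of the proof of Lemma \ref{lem:LinearBoundedSectionGeneratesCone} extracts from linear boundedness of $S_{\phi}$.) Hence $z \deftobe y / \phi(y)$ lies in $S_{\phi}$, so by central symmetry about $x$ we have $2x - z \in S_{\phi} \subset C$. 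Since $\psi \in \dualc C$, this gives $\psi(2x - z) \ge 0$, that is, $2\psi(x) - \psi(z) = 2 - \psi(y)/\phi(y) \ge 0$, which rearranges to $\psi(y) \le 2\phi(y)$, as needed.

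Combining the two parts, $2\phi - \psi \in \dualc C$ and $(2\phi - \psi)(x) = 1$, so $2\phi - \psi \in S_{x}$. As $\psi$ was arbitrary and $\phi \in S_{x}$, this shows $S_{x} - \phi = -(S_{x} - \phi)$, i.e.\ $S_{x}$ is centrally symmetric about $\phi$. I do not expect a serious obstacle here: once strict positivity of $\phi$ is in hand, the argument is a direct computation. The only mildly delicate point is the normalization $y \mapsto y/\phi(y)$ used to move an arbitrary nonzero $y \in C$ into $S_{\phi}$, and it is precisely to license this step that one needs strict positivity of $\phi$ on $C \setminus \setof{0}$ (furnished by the boundedness of $S_{\phi}$) rather than mere nonnegativity.
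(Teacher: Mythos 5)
Your proof is correct and takes essentially the same route as the paper's: the paper also shows $2\phi - \psi \in S_{x}$ by reducing membership in $\dualc C$ to the inequality $\psi(S_{\phi}) \le 2$, which follows from central symmetry via $2x - y \in S_{\phi} \subset C$; the only difference is that the paper cites Lemma \ref{lem:LinearBoundedSectionGeneratesCone} to make this reduction, whereas you inline its content (strict positivity of $\phi$ and the normalization $y \mapsto y/\phi(y)$). One small caution: your primary citation of Lemma \ref{lem:BoundedSectionsStrictlySupport} for strict positivity is slightly off, since that lemma assumes $C$ is closed, which is not hypothesized here; your parenthetical fallback to the first paragraph of the proof of Lemma \ref{lem:LinearBoundedSectionGeneratesCone} (which needs only pointedness and linear boundedness) is the correct justification.
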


\begin{proof}
   Let $\psi \in S_{x}$ be given.  We want to show that $\phi -
   (\psi - \phi) = 2 \phi - \psi \in S_{x}$.  Since it is clear
   that $(2 \phi - \psi)(x) = 1$, it remains only to show that $2
   \phi - \psi \in \dualc C$.  By Lemma
   \ref{lem:LinearBoundedSectionGeneratesCone}, it suffices to
   show that $(2 \phi - \psi)(S_{\phi}) \ge 0$, or, equivalently,
   $\psi(S_{\phi}) \le 2$.  To this end, let $y \in S_{\phi}$ be
   given.  Since $S_{\phi}$ is centrally symmetric about $x$, we
   have that $2x - y \in S_{\phi} \subset C$.  Thus, $\psi(2x - y)
   \ge 0$, or, equivalently, $\psi(y) \le 2$, as desired.
\end{proof}

\section{CSS cones are ellipsoidal cones}
\label{sec:CSSCones}

Fix a CSS cone $C$ with nonempty interior in a normed vector space
$V$.  As mentioned in the introduction, the proof that $C$ is an
ellipsoidal cone in the finite-dimensional case appeared in
\cite{JerMcA2013}.  Somewhat surprisingly, there does not seem to
be a straightforward transfinite-induction argument that extends
this result to the infinite-dimensional case.  We will instead
take a detour through the dual cone $\dualc{C}$.

We call a section $C'$ of $C$ a \emph{sectional subcone} of $C$ if
$C'$ contains the origin.  We will give an argument by finite
induction below showing that every finite-\emph{co}dimensional
sectional subcone of $C$ is also CSS. Had we been able to extend
this induction argument to a transfinite-induction argument, we
could have applied the finite-dimensional CSS characterization of
ellipsoidal cones to prove that every finite-dimensional sectional
subcone of $C$ is ellipsoidal.  The conclusion that $C$ itself is
ellipsoidal would then have followed from the Jordan--von Neumann
characterization of inner-product spaces.

Unfortunately, a direct argument by transfinite induction for the
claim that all sectional subcones of CSS cones are CSS cones
eludes us.  Our strategy instead will be as follows.  To show that
$C$ is ellipsoidal, we will show that its dual $\dualc{C}$ is
ellipsoidal.  That $\dualc{C}$ is ellipsoidal will follow from the
Jordan--von Neumann characterization of ellipsoids once we show
that every finite-dimensional sectional subcone of $\dualc{C}$ is
ellipsoidal.  To prove this, we will need to show that every
bounded finite-dimensional section $\dualc{S}$ of $\dualc{C}$ is
centrally symmetric and then apply the finite-dimensional CSS
characterization of ellipsoidal cones.

Thus, we need to find a center of symmetry $\phi$ for a given
fi\-nite-di\-men\-sion\-al section $\dualc{S}$ of $\dualc C$.  To
do this, we look at the perpendicular section $$S \deftobe
\setof{y \in C \st \text{$\psi(y) = 1$ for all $\psi \in
\dualc{S}$}}$$ of $C$.  It follows from Lemma
\ref{lem:PerpOfBoundedSectionIsBounded} that $S$ is a bounded
section of $C$ with finite codimension.  Our finite induction
argument will thus suffice to show that $S$ is centrally
symmetric, with a center of symmetry $x$.  Furthermore, $S$ is
contained in a proper section $T$ of $C$ with co-dimension $1$,
which will determine a dual vector $\phi \in \dualc{C}$ via
$\phi(T) = 1$.  Finally, the central symmetry of $S$ will imply
that $\dualc{S}$ is centrally symmetric about $\phi$ by Lemma
\ref{lem:NormalOfCentSymmSectionIsCentOfSymm}, establishing the
result.

\begin{lemma}
   \label{lem:FiniteCodimSectionsAreCentSymm}
   Let $C$ be a CSS cone with nonempty interior in a normed vector
   space $V$.  Let $S$ be a bounded section of $C$ such that
   $\codim(S) < \infty$ and $S \cap \intr(C) \ne \emptyset$.  Then
   $S$ is centrally symmetric.
   
   Indeed, $S$ is contained in a proper section $T$ of $C$ such
   that the center of symmetry of $T$ lies on $S$.
\end{lemma}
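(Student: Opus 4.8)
The plan is to prove the ``moreover'' clause, which implies the first assertion: if $T$ is a bounded proper section of $C$ whose center of symmetry $x$ lies on $A := \aff(S)$, then $S = T \cap A$, and since $A - x$ is a linear subspace the relation $T - x = -(T - x)$ restricts to $S - x = -(S - x)$. So it suffices, given a bounded section $S$ of codimension $n$ meeting $\intr(C)$, to produce a bounded proper section $T \supseteq S$ of $C$ whose center lies on $A$. Note first that $0 \notin A$, since otherwise $A \cap C$ would be an unbounded subcone; hence Lemma~\ref{lem:PerpIsADuality} applies and the perpendicular affine space $A^{\perp} \subset \dual V$ is $(n-1)$-dimensional with $\coperp{(A^{\perp})} = A$. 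By Lemmas~\ref{lem:InteriorNormalsGiveBoundedSections} and~\ref{lem:BoundedSectionsStrictlySupport}, each $\phi$ in the open pencil $\Phi := A^{\perp} \cap \intr(\dualc C)$ determines a bounded proper section $S_{\phi}(C) \supseteq S$, centrally symmetric about a unique center $c(\phi)$ with $\phi(c(\phi)) = 1$. Producing $T$ thus amounts to finding $\phi^{\ast} \in \Phi$ with $c(\phi^{\ast}) \in A$, i.e.\ with $\psi(c(\phi^{\ast})) = 1$ for every $\psi \in A^{\perp}$.

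First I would establish the central symmetry of $S$ (and so locate the sought center $x = c(\phi^{\ast})$) by a finite induction on codimension, phrased through sectional subcones. The span $W := \lin(A)$ is a closed subspace of codimension $n-1$ in which $A$ is a hyperplane, so $S = (C \cap W) \cap A$ is a bounded \emph{proper} section of the sectional subcone $C \cap W$. Hence it is enough to prove that every finite-codimensional sectional subcone $C \cap W$ (with nonempty relative interior and a bounded proper section) is again a CSS cone. I would induct on $\codim(W)$: the base case is the hypothesis on $C$, and for the step I choose $W \subset W'$ with $\codim(W') = \codim(W) - 1$, so that $C \cap W'$ is a CSS cone by induction and $C \cap W$ is a \emph{codimension-one} sectional subcone of it. Thus the whole induction collapses to a single assertion — a codimension-one sectional subcone of a CSS cone is CSS — which, unwinding Definition~\ref{defn:CSSCones}, is precisely the statement that every bounded codimension-two section (meeting the interior) of a CSS cone is centrally symmetric.

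This codimension-two case is the crux. Now $\dim A^{\perp} = 1$, so $\Phi$ is an open segment on a line $\ell \subset \dual V$, and the condition $c(\phi^{\ast}) \in A$ becomes one scalar equation $\phi_{0}(c(\phi)) = \phi_{1}(c(\phi))$ for two fixed generators of $\ell$. I would solve it by an intermediate-value argument along $\Phi$: the center map $\phi \mapsto c(\phi)$ is continuous, and as $\phi$ tends to either endpoint of $\Phi$ its limit lies on $\bndr(\dualc C)$, so $S_{\phi}(C)$ degenerates to an unbounded section and $c(\phi)$ escapes to infinity along a recession ray of $C$ contained in the limiting hyperplane. Comparing the two escape rays should show that $\phi_{0}(c(\phi)) - \phi_{1}(c(\phi))$ changes sign across $\Phi$, yielding $\phi^{\ast}$; the natural candidate is the midpoint of the closed dual segment $\ell \cap \dualc C$, which is automatically its own center of symmetry, matched to the requirement $c(\phi^{\ast}) \in A$ through Lemma~\ref{lem:NormalOfCentSymmSectionIsCentOfSymm}.

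With the central symmetry of $S$ in hand — giving a center $x \in A$, which moreover lies in $\intr(C)$, being the midpoint of a point of $S \cap \intr(C)$ and its reflection through $x$ — it remains to produce the ambient proper section $T$, and this is where I expect the main obstacle. The subcone reduction only ever yields sections of positive codimension in $V$, so the codimension-one $T$ must be extracted directly from the full $(n-1)$-dimensional pencil $\Phi$. I would regard $\phi \mapsto \bigl(\psi(c(\phi)) - 1\bigr)$, as $\psi$ ranges over $n-1$ independent directions in $A^{\perp}$, as a map $\Phi \to \R^{n-1}$ and show it vanishes at some $\phi^{\ast}$ by a degree (Poincar\'e--Miranda) argument, using that $c(\phi) \to \infty$ as $\phi \to \bndr\Phi$ to control the map on the boundary. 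The delicate points are the continuity and properness of the center map under the degeneration of sections to unbounded ones, and the verification that this boundary behavior forces a zero; the one-dimensional case above is the prototype for that analysis.
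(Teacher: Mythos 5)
Your overall skeleton coincides with the paper's: reduce everything to the ``moreover'' clause, and use a finite induction through codimension-one subspaces to collapse the problem to the codimension-two case. Where you diverge is in the codimension-two crux, which you attack on the dual side, and that is where the genuine gaps sit. The most serious one is foundational: your entire construction presupposes that the pencil $\Phi = A^{\perp} \cap \intr(\dualc{C})$ is nonempty, i.e.\ that \emph{some} closed hyperplane containing $A$ already meets $C$ in a bounded set. That is the existence half of what the lemma asserts, and in infinite dimensions it is not free. Writing $L = \aff(S) - y$, nonemptiness of $\Phi$ is equivalent to the two-dimensionality of $\dualc{C} \cap L^{\perp}$; a priori the image of $C$ in the two-dimensional quotient $V/L$ could be a pointed cone whose closure is a half-plane, in which case $A^{\perp} \cap \dualc{C}$ is a single boundary point of $\dualc{C}$, every proper section of $C$ containing $S$ is unbounded, and your intermediate-value argument has nothing to run on. The paper's proof works in the primal precisely to manufacture a member of your pencil: it chooses the line $\ell$ through $s = Q(S)$ in the quotient with $s$ the midpoint of $\ell \cap Q(C)$ and $(\ell - s) \cap Q(C) = \setof{0}$, pulls $\ell$ back to a hyperplane $H$, and gets boundedness of $T = H \cap C$ from strict support together with Lemma~\ref{lem:BoundedSectionsStrictlySupport}. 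You have assumed away the step that the paper's construction exists to supply. The second gap is the pair of analytic claims your IVT needs --- continuity of $\phi \mapsto c(\phi)$ and the signed escape of $c(\phi)$ at the endpoints --- which you flag but do not prove. They are fillable (for the escape: sections near an endpoint contain points of arbitrarily large norm by the quantitative form of Lemma~\ref{lem:InteriorNormalsGiveBoundedSections}; reflecting such points through $c(\phi)$ and pairing with a fixed $\phi_{0} \in \Phi$, which satisfies $\phi_{0} \geq \norm{\cdot}/\rho$ on $C$, forces $\phi_{0}(c(\phi)) \to \infty$ while $\phi_{\pm}(c(\phi))$ stays bounded), but with no compactness available this is real work, and the paper's argument is designed to avoid it entirely: it constructs \emph{one} section, lets the CSS property hand it a center, and then uses uniqueness of the midpoint of the bounded segment $\ell \cap Q(C)$ to force that center onto $S$. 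No limits, no continuity of centers.

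Your final paragraph attacks a problem your own induction already solves, so the ``main obstacle'' you anticipate is illusory. Once the codimension-two case is proved \emph{together with} its ``moreover'' clause, the induction propagates both conclusions: the subcone step hands you a proper section $T'$ of $C \cap W$, which is a bounded codimension-two section of $C$ meeting $\intr(C)$ with center on $S$; applying the codimension-two case to $T'$ produces a proper section $T$ of $C$ whose center lies on $T'$; and since $T'$ is then a section of the centrally symmetric set $T$ passing through the center of $T$, uniqueness of centers of bounded sets puts that center back on $S$. Thus no Poincar\'e--Miranda or degree argument over an $(n-1)$-dimensional pencil is needed, which is fortunate, since you concede its ``delicate points'' are unresolved. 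I will add that your closing suggestion --- take $\phi^{\ast}$ to be the midpoint of the segment $A^{\perp} \cap \dualc{C}$ --- is the most salvageable part of your dual approach: by Lemma~\ref{lem:NormalOfCentSymmSectionIsCentOfSymm} the set $S_{x^{\ast}}(\dualc{C}) \cap L^{\perp}$ is a bounded chord of the planar cone $\dualc{C} \cap L^{\perp}$ centrally symmetric about $\phi^{\ast}$, and comparing its endpoints with those of the segment forces $\psi(x^{\ast}) = 1$ for all $\psi \in A^{\perp}$, hence $x^{\ast} \in A$ by Lemma~\ref{lem:PerpIsADuality}. But you do not carry this out, and it too rests on the unproved nonemptiness of $\Phi$. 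As written, the proposal is an outline whose key existence and continuity steps are missing, not a proof.
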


\begin{proof}
   We begin with the case where $\codim(S) = 2$.  We will show
   that there is a bounded proper section $T$ of $C$ containing
   $S$ whose center of symmetry (which exists by the CSS property)
   lies on $S$.
   
   Fix $y \in S$, and let $L \deftobe \aff(S) - y$.  The image of
   $C$ under the quotient map $Q \maps V \to V/L$ is a
   $2$-dimensional pointed cone in which $Q(S)$ contains a single
   point.  Put $C' \deftobe Q(C)$ and $\setof{s} \deftobe Q(S)$.
   Since quotient maps are open, $s \in \intr(C')$.  Let $\ell
   \subset V/L$ be the affine line through $s$ such that $s$ is
   the midpoint of $\ell \cap C'$.  Let $H \deftobe Q^{-1}(\ell)$,
   and put $T \deftobe H \cap C$.  We claim that $T$ is a bounded
   proper section of $C$ containing $S$ whose center of symmetry
   is on $S$.
   
   It is clear that $S \subset T$.  To see that $T$ is bounded,
   observe that $H - y$ strictly supports $C$ at $0$.  That is,
   $(H-y) \cap C = \setof{0}$.  For, suppose that $z \in (H-y)
   \cap C$.  Then, since $(\ell - s) \cap C' = \setof{0}$, we have
   that $Q(z) = 0$, and so $z \in (S-y) \cap C$.  If $z$ were
   nonzero, then $\lambda z$ would also be in $(S-y) \cap C$ for
   all $\lambda > 0$, so $S$ would be unbounded, contrary to our
   hypothesis.  Hence, $z = 0$.  Thus, by Lemma
   \ref{lem:BoundedSectionsStrictlySupport}, $T$ is bounded, so
   $T$ has a center of symmetry $x$, which must map to the center
   of symmetry of $\ell \cap C'$ under $Q$.  That is, $Q(x) = s$,
   so $x \in S$, as desired.  Thus, $S$ is a section of a
   centrally symmetric set that contains the center of symmetry of
   that set.  Therefore, $S$ itself is centrally symmetric.
   
   If $n \deftobe \codim(S) \ge 3$, fix a codimension-1 linear
   subspace $W \subset V$ containing $S$.  By the preceding
   argument, $C \cap W$ is a CSS cone in which $S$ is a bounded
   codimension-$(n-1)$ section intersecting the interior of $C
   \cap W$.  The theorem now follows from the induction hypothesis
   applied to $C \cap W$.
\end{proof}

\begin{lemma}
   \label{lem:CSSImpliesDualIsCoCSS}
   Let $C$ be a CSS cone with nonempty interior in a normed vector
   space $V$.  Let $S^{\ast}$ be a bounded section of $\dualc C$
   such that $\dim(S^{\ast}) < \infty$ and $S^{\ast} \cap
   \intr(\dualc C) \ne \emptyset$.  Then $S^{\ast}$ is centrally
   symmetric.
\end{lemma}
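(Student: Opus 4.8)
The plan is to realize the desired center of symmetry as a dual vector produced by a perpendicular section of $C$, exactly along the lines sketched in the discussion preceding the lemma. Write $B \deftobe \aff(\dualc S)$; since $\dualc S$ is a bounded, finite-dimensional section of $\dualc C$, the affine subspace $B$ is finite-dimensional, closed, and avoids the origin (if $0 \in B$ then $B$ would be a linear subspace, and the ray through any point of $\dualc S = B \cap \dualc C$ would remain in $\dualc S$, contradicting boundedness). Set
$$ S \deftobe \coperp{B} \cap C = \setof{y \in C \st \psi(y) = 1 \text{ for all } \psi \in \dualc{S}}. $$
By Lemma~\ref{lem:PerpOfBoundedSectionIsBounded}, $S$ is a bounded, finite-codimensional section of $C$ that meets $\intr(C)$. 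First I would feed $S$ into Lemma~\ref{lem:FiniteCodimSectionsAreCentSymm}, which produces a bounded proper section $T$ of $C$ with $S \subset T$ whose center of symmetry $x$ lies on $S$.

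Next I would convert $T$ into a dual vector and transport the symmetry into $\dual V$. Because $T$ is a bounded proper section of the cone $C$, the hyperplane cutting it out must miss the origin (otherwise $T$ would contain a ray), so $T = S_{\phi}(C)$ for a unique $\phi \in \dual V$, and $\phi \in \intr(\dualc C)$ by Lemma~\ref{lem:InteriorNormalsGiveBoundedSections}. Since $S_{\phi}(C) = T$ is bounded and centrally symmetric about $x$, Lemma~\ref{lem:NormalOfCentSymmSectionIsCentOfSymm} then gives that $S_{x}(\dualc C)$ is centrally symmetric about $\phi$.

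The remaining and most delicate step—the one I expect to be the main obstacle—is the descent from the symmetry of the proper section $S_{x}(\dualc C)$ to the symmetry of $\dualc S$ itself, since a priori $\dualc S$ is only a subset of $S_{x}(\dualc C)$. Indeed $x \in S$ forces $\psi(x) = 1$ for every $\psi \in \dualc S$, so $B \subset \setof{\psi \in \dual V \st \psi(x) = 1}$ and hence $\dualc S = B \cap S_{x}(\dualc C)$. To finish, I must show that the center $\phi$ actually lies in $B = \aff(\dualc S)$. Here I would use that $S$ meets $\intr(C)$, so $\aff(S) = \coperp{B}$; since $\phi$ is identically $1$ on $T \supset S$, it is identically $1$ on $\aff(S) = \coperp{B}$, whence $\phi \in (\coperp{B})^{\perp} = B$ by the perpendicular duality of Lemma~\ref{lem:PerpIsADuality}. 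With $\phi \in B$ secured, the symmetry transfers immediately: for any $\psi \in \dualc S$ the reflection $2\phi - \psi$ is an affine combination of $\phi, \psi \in B$, so it lies in $B$, and it lies in $S_{x}(\dualc C)$ by the symmetry of that set about $\phi$; therefore $2\phi - \psi \in B \cap S_{x}(\dualc C) = \dualc S$, proving that $\dualc S$ is centrally symmetric about $\phi$. Everything outside this last identification $\phi \in \aff(\dualc S)$ is a direct assembly of the preceding lemmas, so it is precisely the interplay between $\aff(S) = \coperp{B}$ and Lemma~\ref{lem:PerpIsADuality} that carries the weight of the argument.
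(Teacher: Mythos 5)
Your proposal is correct and follows essentially the same route as the paper's own proof: form the perpendicular section $S = \coperp{(\aff \dualc S)} \cap C$, apply Lemma~\ref{lem:PerpOfBoundedSectionIsBounded} and Lemma~\ref{lem:FiniteCodimSectionsAreCentSymm} to get the proper section $T$ with center $x \in S$, transport the symmetry via Lemma~\ref{lem:NormalOfCentSymmSectionIsCentOfSymm}, and finish by locating $\phi$ in $\aff(\dualc S)$ through the duality $\aff(\dualc S) = (\aff S)^{\perp}$ of Lemma~\ref{lem:PerpIsADuality}. The only difference is that you spell out more explicitly some steps the paper leaves implicit (that $0 \notin \aff(\dualc S)$, the uniqueness of $\phi$, and the final reflection argument showing $2\phi - \psi \in \dualc S$), which is a welcome clarification rather than a departure.
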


\begin{proof}
   Let $S \deftobe \setof{y \in C \st \text{$\psi(y) = 1$ for all
   $\psi \in S^{\ast}$}}$.  By Lemma
   \ref{lem:PerpOfBoundedSectionIsBounded}, we have that $S$ is
   bounded, intersects the interior of $C$, and has finite
   codimension.  Thus, by Lemma
   \ref{lem:FiniteCodimSectionsAreCentSymm}, there exists a proper
   section $T$ of $C$ containing $S$ whose center of symmetry $x$
   is in $S$.  Let $\phi \in C^{*}$ be such that $S_{\phi}
   \deftobe S_{\phi}(C) = T$.  Then, by
   Lemma~\ref{lem:NormalOfCentSymmSectionIsCentOfSymm}, $\phi$ is
   the center of symmetry of $S_{x} \deftobe S_{x}(\dualc C)$.
   Note that $S^{\ast} \subset S_{x}$, so it remains only to show
   that $\phi \in S^{\ast}$.
   
   Indeed, since $S \cap \intr C \ne \emptyset$, we have that
   $\aff(S) = \coperp(\aff \dualc S)$.  Thus, $\aff(\dualc S) =
   (\aff S)^{\perp}$ by Lemma \ref{lem:PerpIsADuality}.  In
   particular, $\phi \in \aff(S^{\ast})$.  Since $\phi \in \dualc
   C$, we conclude that $\phi \in S^{\ast}$, as desired.
\end{proof}

The previous Lemma motivates the following definition.

\begin{definition}
   Let $C$ be a closed pointed cone in a normed vector space $V$
   with nonempty interior.  We call $C$ \emph{co-CSS} if every
   bounded finite-dimensional section of $C$ intersecting the
   interior of $C$ is centrally-symmetric.
\end{definition}

Thus, Lemma \ref{lem:CSSImpliesDualIsCoCSS} says that the dual of 
a CSS cone is co-CSS.

\begin{lemma}
   \label{lem:FiniteDimSubConesOfCoCSSAreEllipsoidal}
   Let $C$ be a co-CSS cone in a normed vector space $V$.  Fix a
   finite-dimensional subspace $L \subset V$ such that $L \cap
   \intr C \ne \emptyset$.  Then the cone $L \cap C$ is
   ellipsoidal.
\end{lemma}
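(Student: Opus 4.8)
The plan is to prove that the cone $C_L \deftobe L \cap C$ is a pointed CSS cone relative to the finite-dimensional space $L$, and then to invoke the finite-dimensional CSS characterization of ellipsoidal cones from \cite{JerMcA2013}. We may assume $\dim(L) \ge 2$, the case $\dim(L) = 1$ (in which $C_L$ is a ray, hence trivially ellipsoidal) being immediate. Since $C_L \subset C$ and $C$ is pointed, $C_L$ is pointed, and $C_L$ is closed as an intersection of closed sets. To see that $\relintr{C_L} \ne \emptyset$, fix $p \in L \cap \intr C$; a norm ball about $p$ contained in $C$ meets $L$ in a relatively open neighborhood of $p$ lying in $C_L$, so $p \in \relintr{C_L}$ and, in particular, $\aff(C_L) = L$. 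Using the standard fact that $\relintr{L \cap C} = L \cap \intr C$ whenever $L \cap \intr C \ne \emptyset$, we also record the identity $\relintr{C_L} = L \cap \intr C \subset \intr C$, which will do the work below.

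For condition (2), I note that $C_L$ is a pointed closed cone with nonempty interior in the finite-dimensional space $L$, so its dual cone in $\dual L$ has nonempty interior; any functional in that interior is strictly positive on $C_L \setminus \setof{0}$, and by Lemma \ref{lem:InteriorNormalsGiveBoundedSections}, applied within $L$, the associated section of $C_L$ is bounded. This furnishes a bounded proper section of $C_L$.

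The crux is condition (3): that every bounded proper section $T$ of $C_L$ is centrally symmetric. Write $H \deftobe \aff(T)$, a hyperplane of $L$ since $T$ has codimension $1$ in $L$. The hyperplane $H$ cannot pass through the origin; otherwise $T = H \cap C_L$ would be a cone of dimension $\dim(L) - 1 \ge 1$, hence would contain a ray and fail to be bounded. Thus $T = S_{\phi}(C_L)$ for the unique $\phi \in \dual L$ with $H = \setof{y \in L \st \phi(y) = 1}$, and boundedness of $T$ forces $\phi$ to be strictly positive on $C_L \setminus \setof{0}$. Since $\relintr{C_L}$ is invariant under multiplication by positive scalars, rescaling any nonzero $q \in \relintr{C_L}$ to $q/\phi(q)$ yields a point of $T \cap \relintr{C_L}$; hence $T$ meets $\relintr{C_L} = L \cap \intr C$ and, in particular, $\intr C$. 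Therefore $T = H \cap C$ is a bounded finite-dimensional section of $C$ that intersects $\intr C$, and the co-CSS hypothesis yields that $T$ is centrally symmetric. This establishes (3), so $C_L$ is a pointed CSS cone in $L$, and the finite-dimensional result of \cite{JerMcA2013} shows that $C_L$ is ellipsoidal.

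The step I expect to be the main obstacle is condition (3), specifically the reconciliation of interior relative to the subspace $L$ with interior in the ambient space $V$. The co-CSS hypothesis only constrains sections that meet $\intr C$, whereas the CSS property of $C_L$ concerns sections taken relative to $L$. The identity $\relintr{C_L} = L \cap \intr C$ (valid because $L \cap \intr C \ne \emptyset$), together with the observation that a bounded proper section of $C_L$ cannot be a facet, is precisely what bridges these two viewpoints.
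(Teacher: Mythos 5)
Your proof is correct and takes essentially the same route as the paper: verify that $L \cap C$ is a pointed CSS cone in $L$ and invoke the finite-dimensional characterization of \cite[Theorem 1.4]{JerMcA2013}. The paper's own proof is a terse version of this, asserting directly that co-CSS implies every bounded proper section of $L \cap C$ is centrally symmetric; your additional work (the identity $\relintr{L \cap C} = L \cap \intr C$ and the argument that any bounded proper section of $L \cap C$ must meet $\intr C$, so the co-CSS hypothesis actually applies) is exactly the detail the paper leaves implicit.
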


\begin{proof}
   Since $C$ is pointed, the finite-dimensional cone $L \cap C$ is
   also pointed.  Since $C$ is co-CSS, every bounded proper
   section of $L \cap C$ is centrally symmetric.  In particular,
   $L \cap C$ is CSS. Therefore, by the finite-dimensional CSS
   characterization of ellipsoidal cones \cite[Theorem
   1.4]{JerMcA2013}, $L \cap C$ is ellipsoidal.
\end{proof}

\begin{lemma}
   \label{lem:DualsOfCSSConesAreEllipsoidal}
   Let $C$ be a CSS cone in a normed vector space $V$.  Then the
   dual cone $\dualc C$ is ellipsoidal.
\end{lemma}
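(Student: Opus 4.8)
The plan is to deduce that $\dualc C$ is ellipsoidal from the fact that all of its finite-dimensional sectional subcones are ellipsoidal, realizing the Jordan--von Neumann principle concretely by gluing the quadratic forms attached to these finite-dimensional pieces. First I would record the structure of $\dualc C$: since $C$ has nonempty interior (the standing hypothesis of this section), $\lin C = V$, so $\dualc C$ is pointed; and since $C$ is CSS it has a bounded proper section, which by Lemma \ref{lem:InteriorNormalsGiveBoundedSections} corresponds to an interior point of $\dualc C$, so $\intr \dualc C \ne \emptyset$. By Lemma \ref{lem:CSSImpliesDualIsCoCSS}, $\dualc C$ is co-CSS. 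Fix $x_0 \in \intr C$; under the embedding $V \hookrightarrow \ddual V$ its image lies in $\intr \ddualc C$, so by Lemma \ref{lem:InteriorNormalsGiveBoundedSections} applied to the cone $\dualc C$ with dual $\ddualc C$, the set
\[
   \dualc S \deftobe \setof{\phi \in \dualc C \st \phi(x_0) = 1}
\]
is a bounded proper section of $\dualc C$. It then suffices to prove that $\dualc S$ is an ellipsoid.

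Next I would attach a quadratic form to $\dualc C$. Fix $\phi_* \in \intr \dualc C$. For every finite-dimensional subspace $M \subset \dual V$ with $\phi_* \in M$ (so $M \cap \intr \dualc C \ne \emptyset$), Lemma \ref{lem:FiniteDimSubConesOfCoCSSAreEllipsoidal} shows $\dualc C \cap M$ is an ellipsoidal cone with nonempty interior in $M$, hence is cut out in $M$ by a Lorentzian quadratic form $q_M$: it is the closed nappe of $\setof{\phi \in M \st q_M(\phi) \ge 0}$ containing $\phi_*$. Since a nondegenerate quadratic form is determined by its null cone up to a positive scalar, $q_M$ becomes unique once I normalize $q_M(\phi_*) = 1$. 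The essential point is consistency: if $M_1 \subset M_2$ both contain $\phi_*$, then $q_{M_2}\vert_{M_1}$ and $q_{M_1}$ have the same null cone (both cut out $\dualc C \cap M_1$) and the same value at $\phi_*$, so they coincide. Because every finite subset of $\dual V$ lies in a common such $M$, the forms $q_M$ glue to a single quadratic form $q$ on $\dual V = \lin \dualc C$, with associated symmetric bilinear form $b$, whose forward cone is exactly $\dualc C$.

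Finally I would read off the ellipsoid. Let $H \deftobe \setof{\eta \in \dual V \st \eta(x_0) = 0}$, the linear part of $\aff \dualc S = \setof{\phi \st \phi(x_0) = 1}$. Since $q(\phi_*) = 1 > 0$ and $\aff \dualc S$ is transverse to the $q$-timelike directions, there is a unique $\phi_0 \in \aff \dualc S$ that is $q$-orthogonal to $H$, namely the intersection of $\aff \dualc S$ with the line $q$-perpendicular to $H$; this $\phi_0$ is the center. Expanding gives $q(\phi_0 + \eta) = q(\phi_0) + q(\eta)$ for $\eta \in H$, so
\[
   \dualc S = \setof{\phi_0 + \eta \st \eta \in H,\ -q(\eta) \le q(\phi_0)}.
\]
Because $\dualc S$ is bounded, $-q$ cannot vanish on any line in $H$ (a null or timelike line there would produce a ray in $\dualc S$), so $-q\vert_{H}$ is positive definite and hence an inner product on $H$; after rescaling by $q(\phi_0) > 0$, the displayed set exhibits $\dualc S - \phi_0$ as its closed unit ball. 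Thus $\dualc S$ is an ellipsoid, and $\dualc C$ is ellipsoidal.

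The main obstacle is the gluing step, that is, producing a single globally consistent form (equivalently, pinning a single center of symmetry $\phi_0$ for the \emph{infinite-dimensional} section $\dualc S$). The finite-dimensional sections individually supply ellipsoids, but their centers are a priori unrelated, and the perpendicular correspondence of Lemma \ref{lem:PerpOfBoundedSectionIsBounded} degenerates for the codimension-$1$ section $\dualc S$ (its perpendicular in $V$ is the single point $x_0$), so central symmetry of $\dualc S$ cannot be imported from $C$. The normalization $q_M(\phi_*) = 1$ at a fixed interior functional is exactly what forces compatibility, and this is where the Jordan--von Neumann characterization does its work: the consistent family $\setof{-q_M\vert_{H \cap M}}$ is the device that promotes ``every two-dimensional central section of $\dualc S$ is an ellipse'' to ``$\dualc S$ is an ellipsoid.''
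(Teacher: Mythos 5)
Your gluing of the forms $q_M$ into a single algebraic quadratic form $q$ on $\dual{V}$ whose forward cone is $\dualc{C}$ is sound (the consistency check plus polarization is routine), and so is your proof that $-q$ is positive definite on $H$. The genuine gap is the sentence asserting that there is ``a unique $\phi_0 \in \aff(\dualc{S})$ that is $q$-orthogonal to $H$, namely the intersection of $\aff(\dualc{S})$ with the line $q$-perpendicular to $H$.'' That is finite-dimensional reasoning. For a merely algebraic nondegenerate symmetric bilinear form on an infinite-dimensional space, the orthogonal complement of a hyperplane need not be a line; it can be $\setof{0}$. Model example: on $\R \oplus X$ with $q(t,x) = t^2 - \langle x, x\rangle$, where $X$ is an inner-product space and $f$ is a linear functional on $X$ not representable by any element of $X$, the $q$-orthocomplement of the hyperplane $\setof{(f(x),x) \st x \in X}$ is trivial. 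In your setting, writing $\phi_1$ for any point of $\dualc{S} \cap \intr(\dualc{C})$, the existence of $\phi_0$ is \emph{equivalent} to Riesz-representing the functional $\eta \mapsto b(\phi_1,\eta)$ on the inner-product space $(H, -q)$; it is a representability statement, not a formal consequence of nondegeneracy. So you have misplaced the main obstacle: it is not the gluing (which your normalization $q_M(\phi_\ast)=1$ handles), but the existence of the center itself, since the centers of the finite-dimensional sections $\dualc{S} \cap M$ genuinely move as $M$ grows and nothing in your construction pins down a limit.

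The gap is fillable, but it takes a real argument that your write-up omits. One route: from $q(\phi_1 \pm \eta) \ge 0$ for $\norm{\eta}$ small one gets $-q(\eta) \le c\norm{\eta}^2$ on $H$; conversely, the $q$-interval $\setof{\phi_1 + t\eta \st t_- \le t \le t_+}$ lies in $\dualc{S}$ and has length at least $2\sqrt{q(\phi_1)}/\sqrt{-q(\eta)}$ in units of $\norm{\eta}$, so boundedness of $\dualc{S}$ gives $\norm{\eta} \le C\sqrt{-q(\eta)}$. Hence the $(-q)$-norm on $H$ is equivalent to the dual norm; since $\dual{V}$ is complete (even when $V$ is not) and $H$ is closed, $(H,-q)$ is a Hilbert space, $b(\phi_1,\cdot)|_H$ is bounded, and Riesz representation produces $\phi_0$. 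The paper avoids all of this by exploiting the CSS hypothesis on $C$ one more time: instead of an arbitrary $x_0 \in \intr(C)$, it takes $x$ to be the \emph{center of symmetry} of a bounded proper section $S_\phi(C)$, and Lemma \ref{lem:NormalOfCentSymmSectionIsCentOfSymm} then says outright that $S_x(\dualc{C})$ is centrally symmetric about $\phi$; with the center in hand, the co-CSS property makes every finite-dimensional section through $\phi$ an ellipsoid centered at $\phi$, and Jordan--von Neumann applies to the gauge of $S_x - \phi$. Your route uses the symmetry of sections of $C$ only through the co-CSS property of $\dualc{C}$, which is exactly why you are forced to manufacture the center by hand.
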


\begin{proof}
   Since $C$ is CSS, there exists a bounded proper section
   $S_{\phi} \deftobe S_{\phi}(C)$ of $C$, where, by Lemma
   \ref{lem:InteriorNormalsGiveBoundedSections}, $\phi \in
   \intr(\dualc C)$.  Let $x \in C$ be the center of symmetry of
   $S_{\phi}$.  Then, by Lemma
   \ref{lem:NormalOfCentSymmSectionIsCentOfSymm}, the section
   $S_{x} \deftobe S_{x}(\dualc C)$ is centrally symmetric about
   $\phi$.  In addition, $S_{x}$ is a proper section of $\dualc C$
   because it contains the point $\phi \in \intr(\dualc C)$.
   Furthermore, $S_{x}$ is bounded by Lemma
   \ref{lem:InteriorNormalsGiveBoundedSections} because $x \in
   \intr(C) \hookrightarrow \intr(\ddualc C)$ under the canonical
   embedding.  Finally, by Lemma
   \ref{lem:FiniteDimSubConesOfCoCSSAreEllipsoidal}, every
   finite-dimensional section of $S_{x}$ through $\phi$ is an
   ellipsoid centered at $\phi$.  Hence, by the Jordan--von
   Neumann characterization of inner-product spaces
   \cite{JorvNeu1935}, $S_{x}$ is an ellipsoid.  Therefore, $\dual
   C$ is ellipsoidal.
\end{proof}

We are now ready to prove our main result.

\begin{proof}[Proof of Theorem \ref{thm:CSSTheorem}]
   \label{proof:CSSproof}
   Every bounded proper section of an ellipsoidal cone is an 
   ellipsoid, and ellipsoids are centrally symmetric.  Hence, 
   ellipsoidal cones are CSS.
   
   To prove the converse, let a CSS cone of dimension $\ge 2$ be
   given.  By Lemma \ref{lem:DualsOfCSSConesAreEllipsoidal},
   $\dual{C}$ is ellipsoidal.  Fix an ellipsoidal proper section
   $\dual{S}$ of $\dual{C}$, and let $\phi \in \dual{S}$.  Thus,
   we have an inner product on the codimension-$1$ linear subspace
   $M \deftobe \aff(\dual{S}) - \phi$.  Since $\dual{S}$ is
   bounded, $\aff(\duala{S})$ does not contain the origin.  Hence,
   we can complete the inner product on $M$ to an inner product on
   all of $\dual{V}$.  Indeed, since $\dual{V}$ is already a dual
   space, it is in fact a Hilbert space.  The dual of an
   ellipsoidal cone in a Hilbert space is ellipsoidal
   \cite[p.~51]{Krein}, so $\ddual{C}$ is ellipsoidal.  Since
   $\ddual{C}$ is the closure of $C$ under the canonical embedding
   $V \hookrightarrow \ddual{V}$, we conclude that $C$ itself is
   ellipsoidal in $V$.
\end{proof}

It follows that if a normed vector space $V$ contains a CSS cone
with nonempty interior, then $V$ is an inner product space.  In
particular, if a Banach space $X$ contains a full-dimensional CSS
cone, then $X$ is a Hilbert space.

\section{FBI cones are ellipsoidal cones}
\label{sec:FBICones}

We conclude by proving that every cone in a normed vector space
$V$ that satisfies the FBI property (Definition \ref{def:FBI}) is
ellipsoidal.

\begin{lemma}
   \label{lem:ConeIntervalsAreBounded}
   Let $C$ be a cone in a normed vector space $V$ such that
   $\intr(C)$ and $\intr(\dualc C)$ are both nonempty.  Then, for
   each $a \in \intr(C)$, the intersection $C \cap (a - C)$ is
   bounded.
\end{lemma}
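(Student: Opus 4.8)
The plan is to exploit a single functional from $\intr(\dualc C)$ to control the order interval $C \cap (a - C)$ in both its ``radial'' and its ``height'' directions. First I would observe that the hypothesis $\intr(\dualc C) \ne \emptyset$ already forces $C$ to be pointed: if there were a nonzero $v \in C \cap (-C)$, then every $\psi \in \dualc C$ would satisfy $\psi(v) \ge 0$ and $\psi(-v) \ge 0$, hence $\psi(v) = 0$, so $\dualc C$ would lie in the proper closed hyperplane $\setof{\psi \in \dual V \st \psi(v) = 0}$ and therefore have empty interior, a contradiction. This lets me apply the lemmas that assume $C$ is pointed.

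Next, fix $\phi \in \intr(\dualc C)$. By Lemma~\ref{lem:InteriorNormalsGiveBoundedSections}, the section $S_{\phi}(C)$ is bounded, so I may choose $r > 0$ with $S_{\phi}(C) \subset B_{r}(0)$. The key elementary point is that $\phi$ is \emph{strictly} positive on $C \setminus \setof{0}$: if some $x \in C \setminus \setof{0}$ had $\phi(x) = 0$, then, choosing by Hahn--Banach a norm-one $\nu \in \dual V$ with $\nu(x) = \norm{x} > 0$, the functional $\phi - \epsilon\nu$ would still lie in $\dualc C$ for small $\epsilon > 0$ yet take the value $-\epsilon\norm{x} < 0$ at $x$, which is absurd. (This is the same device used in the converse half of Lemma~\ref{lem:InteriorNormalsGiveBoundedSections}; alternatively, one may simply cite Lemma~\ref{lem:BoundedSectionsStrictlySupport} if $C$ is taken to be closed.)

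With these two facts in hand I would finish as follows. Let $x \in C \cap (a - C)$. Since $x \in C$ and $a - x \in C$, positivity of $\phi$ gives $0 \le \phi(x)$ and $\phi(a - x) \ge 0$, so $0 \le \phi(x) \le \phi(a)$. If $\phi(x) > 0$, then $x/\phi(x) \in S_{\phi}(C) \subset B_{r}(0)$, whence $\norm{x} = \phi(x)\,\norm{x/\phi(x)} \le r\,\phi(a)$; if instead $\phi(x) = 0$, strict positivity forces $x = 0$, so again $\norm{x} \le r\,\phi(a)$. Thus $C \cap (a - C) \subset B_{r\phi(a)}(0)$, which is bounded.

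Because every step reduces to one of the cone--dual lemmas already established, I do not anticipate a serious obstacle; the only place requiring care is the degenerate case $\phi(x) = 0$, which is exactly where strict positivity of $\phi$ on $C$—rather than mere nonnegativity—is used. It is worth noting that the argument in fact uses only $a \in C$, not $a \in \intr(C)$, and never invokes closedness of $C$.
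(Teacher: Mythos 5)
Your proof is correct and follows essentially the same route as the paper's: choose $\phi \in \intr(\dualc C)$, use boundedness of the section $S_{\phi}(C)$ together with strict positivity of $\phi$ on $C \setminus \setof{0}$, and trap $C \cap (a - C)$ in the slab $\setof{y \in C \st 0 \le \phi(y) \le \phi(a)}$, which the scaling argument shows lies in $B_{r\phi(a)}(0)$ --- this is exactly what the paper expresses by saying $S_{\phi}$ is a bounded base strictly supporting $a - C$ after normalization. If anything, your version is slightly more careful than the paper's, since you explicitly derive pointedness of $C$ from $\intr(\dualc C) \ne \emptyset$ before invoking the lemmas that assume it, a step the paper leaves implicit.
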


\begin{proof}
   Since $\intr(\dualc{C}) \ne \emptyset$, there exists a
   functional $\phi \in \dualc C$ such that $S_{\phi} \deftobe
   S_{\phi}(C)$ is a bounded base of $C$ by Lemma
   \ref{lem:LinearBoundedSectionGeneratesCone}.  In particular,
   $\ker(\phi)$ strictly supports $C$ at $0$ by Lemma
   \ref{lem:BoundedSectionsStrictlySupport}.  By suitably
   normalizing $\phi$, we also have that $S_{\phi}$ strictly
   supports $a - C$ at $a$.  Thus,
   \begin{equation*}
      C \cap (a - C) \subset \setof{y \in C \st \phi(y) \le 1}.
   \end{equation*}
   Since $S_{\phi}$ is bounded and is a base for $C$, it follows
   that $C \cap (a - C)$ is also bounded.
\end{proof}

\begin{proof}[Proof of Theorem \ref{thm:FBIimpliesEllipsoidal}]
   \label{proof:FBIimpliesEllipsoidal}%
   Without loss of generality, suppose that $\intr{C} \ne
   \emptyset$.  Fix $x \in \intr(C)$, and set $S \deftobe
   \conv(\bndr C \cap \bndr(2x - C))$.  By the FBI property and
   Lemma \ref{lem:ConeIntervalsAreBounded}, $S$ is a bounded
   section of $C$.  Moreover, $S$ is centrally symmetric about
   $x$.  We show that every $2$-dimensional section of $S$ through
   $x$ is an ellipse.  Let $E$ be such a section.  Observe that $0
   \notin \aff E$, because $0 \in \aff E \subset \aff S$ would
   imply that $\aff S = \lin S$, contrary to the boundedness of
   $S$.  Therefore, $C' \deftobe \cone(E)$ is a $3$-dimensional
   cone.  We claim that $\cone(E)$ is an FBI cone.  To prove this,
   let $y \in \relintr C'$, and let $\Gamma \deftobe \bndr(C')
   \cap \bndr(y - C')$.  On the one hand, $\Gamma$ is contained in
   $\lin E$.  On the other hand, let $H$ be the affine hyperplane
   in $V$ containing $\bndr C \cap \bndr(y - C)$.  As above, $0
   \notin H$.  Since $0 \in \lin E$, it follows that $\Gamma
   \subset (\lin E) \cap H$, but $\lin E \not\subset H$.  Hence,
   $\Gamma$ is contained in a $2$-dimensional affine subspace of
   $\lin E$.  That is, $C'$ is an FBI cone.  It follows from
   \cite[Theorem 1.2]{JerMcA2013} that $E$ is an ellipse.
   Therefore, by the Jordan--von Neumann characterization of
   inner-product spaces \cite{JorvNeu1935}, $S$ is an ellipsoid.
\end{proof}

% \bibliography{Bibliography}
% \bibliographystyle{amsplain-fi-arxlast}

\providecommand{\bysame}{\leavevmode\hbox to3em{\hrulefill}\thinspace}
\providecommand{\MR}{\relax\ifhmode\unskip\space\fi MR }
% \MRhref is called by the amsart/book/proc definition of \MR.
\providecommand{\MRhref}[2]{%
  \href{http://www.ams.org/mathscinet-getitem?mr=#1}{#2}
}
\providecommand{\href}[2]{#2}

\end{document}